\documentclass{amsart}

\usepackage{amssymb, enumerate}

\usepackage[all]{xy}


\newtheorem{thm}{Theorem}
\newtheorem*{thm*}{Theorem}

\newtheorem{lem}[thm]{Lemma}
\newtheorem{cor}[thm]{Corollary}

\newtheorem{prop}[thm]{Proposition}

\newtheorem{conj}[thm]{Conjecture}
\newtheorem*{conj*}{Conjecture}
   
\theoremstyle{definition}

\newtheorem{exmp}[thm]{Example}

\newtheorem*{ques}{Question}    

\newtheorem{rem}[thm]{Remark}          

\newtheorem*{ack}{Acknowledgments}      
\newtheorem{notation}[thm]{Notation}   
  
\newtheorem{defn-thm}[thm]{Definition--Theorem}  
\newtheorem{defn-lem}[thm]{Definition--Lemma}  
\newtheorem{defn-prop}[thm]{Definition--Proposition}

\theoremstyle{remark}


\setcounter{section}{0}

\renewcommand{\o}[0]{{\mathcal O}} 
\newcommand{\z}[0]{{\mathbb Z}}


\newcommand{\p}[0]{{\mathbb P}}

\newcommand{\pic}[0]{\operatorname{Pic}}
\newcommand{\num}[0]{\operatorname{Num}}





\newcommand{\sF}{\mathcal{F}}

\def\loccoh#1.#2.#3.#4.{H^{#1}_{#2}(#3,#4)}

\DeclareMathAlphabet{\mathchanc}{OT1}{pzc}%
                                {m}{it}

\begin{document}
\bibliographystyle{amsalpha}

\title[On the Borisov-Nuer conjecture]{On the Borisov-Nuer conjecture and the image of the Enriques--to--$K3$ map}
\author{Marian Aprodu}
\address{Faculty of Mathematics and Computer Science, University of Bucharest, 
\newline
\indent
14 Academiei Street, 010014 Bucharest, Romania}
\email{marian.aprodu@fmi.unibuc.ro}
\address{``Simion Stoilow'' Institute of Mathematics of the Romanian Academy,
\newline
\indent
P.O. Box 1-764, 014700 Bucharest, Romania
}
\email{marian.aprodu@imar.ro}

\author{Yeongrak Kim}
\address{Faculty of Mathematics and Informatics, University of Saarland,
\newline
\indent
Geb. E2.4, 66123 Saarbr{\"u}cken, Germany
}
\email{kim@math.uni-sb.de}

\begin{abstract}
We discuss the Borisov-Nuer conjecture in connection with the canonical maps from the moduli spaces  $\mathcal M_{En,h}^a$of polarized Enriques surfaces with fixed polarization type $h$ to the moduli space $\mathcal F_g$ of polarized $K3$ surfaces of genus $g$ with $g=h^2+1$, and we exhibit a naturally defined locus $\Sigma_g\subset\mathcal F_g$. One direct consequence of the Borisov-Nuer conjecture is that $\Sigma_g$ would be contained in a particular Noether--Lefschetz divisor in $\mathcal F_g$, which we call the Borisov-Nuer divisor and we denote by $\mathcal{BN}_g$. In this short note, we prove that $\Sigma_g\cap\mathcal{BN}_g$ is non--empty whenever $(g-1)$ is divisible by $4$. To this end, we construct polarized Enriques surfaces $(Y, H_Y)$, with $H_Y^2$ divisible by $4$, which verify the conjecture. 
In particular, the conjecture holds also for any element $\mathcal M_{En,h}^a$, if $h^2$ is divisible by~$4$ and $h$ is the same type of polarization.
\end{abstract}

\maketitle
\section{Introduction}\label{Section:Introduction}

Let $Y$ be an {Enriques surface} over $\mathbb{C}$, that is, a smooth projective surface with $p_g(Y) = q(Y) = 0$ and $2K_Y = \mathcal O_Y$. The universal covering of $Y$ is given by an {\'e}tale double cover map $\sigma_Y : X_Y \to Y$ where $X$ is a $K3$ surface. Hence, an Enriques surface $Y$ determines a pair $(X_Y, \theta_Y)$, where $X_Y$ is its $K3$ cover, and $\theta_Y$ is a fixed-point-free involution on $X_Y$ so that $\sigma_Y$ coincides with the quotient map $X_Y \to X_Y/\theta_Y$. In particular, studying Enriques surfaces $Y$ is equivalent to studying pairs $(X, \theta)$ of $K3$ surfaces $X$ and fixed-point-free involutions $\theta$ on $X$.

A \emph{polarized Enriques surface} is a pair $(Y, H_Y)$, where $Y$ is an Enriques surface and $L \in \pic(Y)$ is an ample line bundle. A \emph{numerically polarized Enriques surface} is a pair $(Y, [H_Y] )$, where $[H_Y] \in \num(Y)\cong U\oplus E_8(-1)$ denotes the numerical class of an ample line bundle $H_Y$ on $Y$. Fix a primitive vector $h\in U\oplus E_8(-1)$. Thanks to the lattice theory, Gritsenko and Hulek were able to give a construction of the moduli space ${\mathcal M}_{En,h}^a$ of numerically polarized Enriques surfaces with a polarization of type $h$ as an open subvariety of a modular variety $\mathcal M_{En,h}$, see \cite{GH16} for details. It is a 10-dimensional quasi-projective variety, and the locus ${\mathcal M}_{En,h}^{nn}$ corresponding to unnodal surfaces (\emph{i.e.}, with no smooth $(-2)$-curves)  is open. For an alternate approach to moduli spaces $\overline{\mathcal E}_{g,\phi}$ using the invariant $\phi$, we refer to \cite{CDGK18}.

Let us consider  $\mathcal F_g$ the moduli space of polarized $K3$ surfaces of genus $g=h^2+1$. Note that $g$ is odd and $g\ge 5$. For any numerical type $h$, we have a natural map
\[
\begin{array} {cccl}
\eta_h: \mathcal M_{En,h}^a & \to & \mathcal F_g\\
  (Y, h=[H_Y]) & \mapsto & (X_Y, \sigma_Y^{*} H_Y). \\
\end{array}
\]
Then the locus
\[
\Sigma_g := \bigcup_{h^2 = g-1} im(\eta_h) \subseteq \sF_g
\]
consists of polarized $K3$ surfaces $(X,H_X)$ which appear as pullbacks of polarized Enriques surfaces $(Y,H_Y)$. Notice that, for any fixed degree $g-1$, there are only finitely many numerical types $h$ with $h^2=g-1$. Indeed, from \cite[Proposition 3.4]{CDGK18} it follows that the number of irreducible components of the moduli space $\overline{\mathcal E}_{g^{\prime},\phi}$ coincides with the number of possible simple decomposition types for $h$ for fixed values $h^2 = 2g^{\prime}-2$ and $\phi(h) = \phi$. Since $0 < \phi^2 \le h^2$ by \cite[Corollary 2.7.1]{CD89}, there are only finitely many possible choices of $\phi$, which implies the claim.

In this note, we discuss a conjecture of Borisov and Nuer on the Enriques lattice $\num(Y) \simeq U \oplus E_8(-1)$, motivated by the Ulrich bundle existence problem, and connect it to the maps $\eta_h$. Let us briefly recall what are Ulrich bundles. Let $X \subset \p^N$ be a smooth projective variety of dimension $n$, and let $H= \o_X(1)$ be a very ample line bundle on $X$. A vector bundle $\mathcal E$ on $X$ which satisfies the following cohomology vanishing condition
\begin{equation}\label{equation:beingUlrich}
H^i (X, \mathcal E(-j)) = 0 \text{ for all } i \text{ and } 1 \le j \le n
\end{equation}
 is called an {Ulrich bundle} on $X$ \cite{ESW03}. They have many interesting applications, in particular, they connect several different topics in algebra and geometry, see \cite{ESW03,Bea18}. One important problem within this topic is to find an Ulrich bundle of smallest possible rank on a given variety. For an Enriques surface $Y$, together with a very ample line bundle $H_Y=\mathcal O_Y(1)$, it is known that $Y$ always carries an Ulrich bundle of rank $2$ \cite{Bea16, Cas17}. On the other hand, Borisov and Nuer observed that the existence of an Ulrich line bundle $N$ on a polarized unnodal Enriques surface $(Y, H_Y)$ is equivalent to the numerical condition
\begin{equation}\label{equation:UlrichOnUnnodalEnriques}
(N-H_Y)^2 = (N-2H_Y)^2 = -2,
\end{equation}
that is, $H_Y$ can be written as a difference of two $(-2)$-line bundles. Here, the unnodal assumption is required only to assure the vanishing of certain cohomology groups. Thus, it is natural to focus only on the equation (\ref{equation:UlrichOnUnnodalEnriques}). They conjectured that it is always possible to find such a line bundle $N$ for any choice of polarization $H_Y$, or even more, for any line bundle:

\begin{conj}[{\cite[Conjecture 2.2]{BN18}}]\label{conjecture:BorisovNuer}
For any line bundle $H$ on an Enriques surface $Y$, there is a line bundle $N \in \pic(Y)$ such that $(N-H)^2 = (N-2H)^2 = -2$.
\end{conj}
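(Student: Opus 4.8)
The plan is to first strip the conjecture down to a purely lattice-theoretic statement and then attack that statement inside $U\oplus E_8(-1)$. Since $(N-H)^2$ and $(N-2H)^2$ are intersection numbers, they depend only on the numerical classes, so I would work in $\num(Y)\cong U\oplus E_8(-1)$ and lift the answer back along the surjection $\pic(Y)\onto\num(Y)$ at the very end (the two lifts differ by $K_Y$, which is numerically trivial). Writing $r:=N-2H$, a short computation shows that the pair of conditions $(N-H)^2=(N-2H)^2=-2$ is equivalent to the single requirement that $r$ be a $(-2)$-class with $r\cdot H=-\tfrac12 H^2$; indeed $N-H=H+r$ then automatically satisfies $(H+r)^2=H^2+2(r\cdot H)+r^2=-2$. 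Thus the conjecture is equivalent to the assertion: \emph{for every $h\in U\oplus E_8(-1)$ there is a root $r$ (i.e.\ $r^2=-2$) with $r\cdot h=-\tfrac12 h^2$.}

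For the core assertion I would exploit the hyperbolic summand. Write $U=\langle e,f\rangle$ with $e^2=f^2=0$, $e\cdot f=1$. When $h$ can be carried by an isometry of $U\oplus E_8(-1)$ into the ``split'' form $m(e+k'f)$ (here $m$ is the divisibility of $h$ and $h^2=2m^2k'$), the vector $r:=-mk'\,f+\rho$, with $\rho$ any fixed root of $E_8(-1)$, does the job: one checks directly that $r^2=\rho^2=-2$ and $r\cdot h=-m^2k'=-\tfrac12 h^2$. Equivalently, setting $w:=h+2r$, the problem is to produce $w\in h^{\perp}$ with $w^2=-h^2-8$ and $w\equiv h\pmod{2L}$, after which $r=\tfrac12(w-h)$; this recasts the core assertion as a representation problem for the negative-definite rank-$9$ lattice $h^{\perp}$ subject to a congruence modulo $2L$, which I would try to settle by the local--global theory of quadratic forms together with a mod-$2$ analysis.

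The main obstacle is that $U\oplus E_8(-1)$ has signature $(1,9)$, so it contains only one hyperbolic plane and \emph{not} two orthogonal copies of $U$; consequently Eichler's transitivity theorem does not apply, and primitive vectors of a fixed square $2k$ need not lie in a single $\mathrm{O}(L)$-orbit --- their orthogonal complements sweep out a genus of negative-definite rank-$9$ lattices whose class number exceeds $1$ once $k$ is large. Hence there is no universal normal form, and the explicit root above is guaranteed only in the split orbit; for the remaining orbits one must solve the representation-with-congruence problem on each $h^{\perp}$ in the genus. The genuinely delicate point is the parity: on $L/2L$ the form $q(\bar x)=\tfrac12 x^2\bmod 2$ gives every root $\bar r$ the value $q(\bar r)=1$, and the required inner product forces $b(\bar r,\bar h)=q(\bar h)$, where $b$ is the reduction of the intersection form. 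When $4\mid H^2$ one has $q(\bar h)=0$ and this parity condition is easy to meet, but when $H^2\equiv 2\pmod 4$ one needs a root $r$ with $r\cdot h$ \emph{odd}, i.e.\ $b(\bar r,\bar h)=1$, and controlling which $q=1$ classes are actually reductions of genuine roots in the non-split orbits is exactly where the argument stalls. This is the precise reason the conjecture is subtle in general and why the present paper confines itself to the case $4\mid H^2$.
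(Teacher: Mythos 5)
You have set out to prove something that the paper itself does not prove: the statement is Conjecture 2.2 of \cite{BN18}, quoted as an open conjecture, and the paper's contribution is only a partial verification --- Theorem \ref{thm:Xi_g is nonempty} produces, for each degree divisible by $4$, \emph{one} polarization type $h$ (realized on Enriques quotients of Jacobian Kummer surfaces) satisfying it, plus the sporadic examples of Remark \ref{Remark:DegreeNotDivisibleBy4}. So there is no ``paper proof'' to match, and your attempt can only be judged as a partial result. As such, what you do prove is correct. The reduction to $\num(Y)\cong U\oplus E_8(-1)$ is legitimate (the kernel of $\pic(Y)\to\num(Y)$ is generated by the numerically trivial $K_Y$); the equivalence of the two conditions with the single condition $r^2=-2$, $r\cdot h=-\tfrac12 h^2$ for $r=N-2H$ checks out, as does the reformulation via $w=h+2r$; and the split-orbit vector $r=-mk'f+\rho$ verifies $r^2=-2$ and $r\cdot h=-m^2k'=-\tfrac12h^2$. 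In fact your construction buys more than you claim: if $h_0$ is primitive with $\phi(h_0)=1$, i.e.\ there is an isotropic $f$ with $h_0\cdot f=1$, then $e':=h_0-\tfrac{h_0^2}{2}f$ is isotropic with $e'\cdot f=1$, so $\langle e',f\rangle\cong U$ splits off with complement $E_8(-1)$ and $h_0=e'+\tfrac{h_0^2}{2}f$ is in split form. Hence you settle the conjecture for \emph{every} line bundle whose primitive part has $\phi$-invariant $1$, in every even degree --- including $h^2\equiv 2\pmod 4$, where the systematic ansatz of Proposition \ref{Prop:SuffConditionDiophantineEquationOnK3} breaks down for parity reasons (Remark \ref{Remark:DegreeNotDivisibleBy4}). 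Your structural worry is also well founded: $\phi$ is an isometry invariant not determined by $h^2$ (e.g.\ $2e+2f+\lambda$ with $\lambda\in E_8(-1)$ primitive of square $-4$ has square $4$ and $\phi=2$, while $e+2f$ has square $4$ and $\phi=1$), so there is genuinely no single normal form; this is exactly the multiplicity of polarization types visible in \cite{CDGK18} and \cite{CD89}.

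The genuine gap is the one you name yourself: for the non-split orbits the representation-with-congruence problem on $h^\perp$ is announced as a programme but not carried out, and the mod-$2$ analysis is only diagnosed, not solved; since completing that step would resolve an open conjecture, the proposal cannot be accepted as a proof of the statement. Two further cautions. First, your closing diagnosis slightly misattributes the paper's restriction: the parity obstruction in Remark \ref{Remark:DegreeNotDivisibleBy4} blocks only the particular solution family $(S,T,U,V)=(S,S,\tfrac12,-\tfrac12)$ of the Diophantine system, and the paper does exhibit ad hoc solutions with $H_Y^2=10,18,26$, so $4\mid H^2$ is where their \emph{systematic} construction lives, not an intrinsic parity wall --- indeed your own split case shows degrees $\equiv 2\pmod 4$ are unobstructed on the $\phi=1$ orbit. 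Second, note the methodological trade-off against the paper: since the conjecture is purely numerical in the fixed lattice $U\oplus E_8(-1)$, your direct lattice approach is more elementary and dispenses entirely with the $K3$ cover; the paper's detour through $\theta^*$-invariant line bundles on a Jacobian Kummer surface (Lemma \ref{Lemma:ConditionThetaInvariantLB}, Proposition \ref{Prop:SuffConditionDiophantineEquationOnK3}) is there because the authors also want \emph{ample, globally generated} representatives, which is what places points in $\Xi_g\subset\mathcal{BN}_g$ and connects to Ulrich bundles --- your split-orbit classes come with no ampleness statement, so they verify the conjecture but do not by themselves populate $\Xi_g$.
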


Suppose that $(Y, H_Y)$ verifies the Borisov-Nuer conjecture; we have a line bundle $N$ on $Y$ which satisfies the above equation (\ref{equation:UlrichOnUnnodalEnriques}). We translate the conjecture in terms of line bundles on its $K3$ covers by observing the image under $\eta_h$ defined above. Let $\sigma:X \to Y = X/\theta$ be the universal cover,  $H_X := \sigma^{*} H_Y$, and let $M := \sigma^{*} N$. The equation (\ref{equation:UlrichOnUnnodalEnriques}) is equivalent to
\[
\left\{
\begin{array}{rcl}
H_X^2 & = & 2g - 2, \\
M^2 & = & 4g-8, \\
H_X \cdot M ~ & = & 3g-3,
\end{array}
\right.
\]
where $g = H_Y^2 + 1 \ge 5$ is an odd integer. Hence, if $(Y, H_Y)$ and $N$ satisfy the equation (\ref{equation:UlrichOnUnnodalEnriques}), then the image $(X, H_X)$ must lie in the \emph{Borisov-Nuer divisor} $\mathcal {BN}_g$, which is a Noether-Lefschetz divisor $\mathcal {NL}_{2g-3,3g-3} \subset \mathcal F_g$ (the subscript stands for the numbers $\left(\frac{1}{2} M^2 + 1 \right)$ and $H_X \cdot M$, respectively). 

Note that a line bundle $M$ on the $K3$ cover $X$ is contained in $\sigma^{*} \pic(Y)$ if and only if $\theta^{*}M \simeq M$ \cite{Hor78}. In the case, the pushforward $\sigma_{*} M$ splits as a direct sum of two line bundles $N \oplus (N \otimes K_Y)$, where $M \simeq \sigma^{*} N$. We consider the sublocus
\[
\Xi_g = 
\left\{ \begin{array}{l|l}
&  \exists ~ \theta : X \to X \text{ fixed-point-free involution }\\
& \text{ such that } \theta^{*} H_X \simeq H_X, \\
(X, H_X) \in \mathcal F_g & \exists ~ M \in \pic (X) \text{ such that } \theta^{*}M \simeq M,  \\ 
 & \text{ and } (X,H_X,M) \in \mathcal {BN}_g.
\end{array}
\right \}
\]
consisting of polarized $K3$ surfaces of genus $g$ which can be obtained by pullback of some polarized Enriques surface $(Y, H_Y)$ together with a line bundle $N$ so that the triple $(Y, H_Y, N)$ verifies the Conjecture \ref{conjecture:BorisovNuer}. 
In particular, we have $\Xi_g \subseteq \Sigma_g \subset \mathcal F_g$. Since the Picard number $\rho(Y)$ of an Enriques surface $Y$ is $10$, both loci $\Xi_g$ and $\Sigma_g$ have high codimensions in $\mathcal F_g$. With this notation, Conjecture \ref{conjecture:BorisovNuer} implies:
\begin{conj}\label{conj:BorisovNuerAlternativeForm}
The two loci $\Xi_g$ and $\Sigma_g$ coincide.
\end{conj}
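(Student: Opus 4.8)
The plan is to prove the implication by establishing the single missing inclusion. Since $\Xi_g \subseteq \Sigma_g$ has already been recorded unconditionally, it suffices to show that Conjecture \ref{conjecture:BorisovNuer} forces the reverse inclusion $\Sigma_g \subseteq \Xi_g$; combining the two then yields the asserted equality of loci.

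First I would start from an arbitrary point $(X, H_X) \in \Sigma_g$. By the very definition of $\Sigma_g$, this point lies in the image of some $\eta_h$ with $h^2 = g-1$, so there is a polarized Enriques surface $(Y, H_Y)$ with $h = [H_Y]$ whose $K3$ cover $\sigma \colon X \to Y = X/\theta$ realizes $H_X = \sigma^{*} H_Y$. Because $\sigma \circ \theta = \sigma$, the pulled-back polarization automatically satisfies $\theta^{*} H_X = \theta^{*}\sigma^{*} H_Y = \sigma^{*} H_Y = H_X$. Thus the first clause in the definition of $\Xi_g$, the existence of a fixed-point-free involution fixing the polarization, is satisfied by this very $\theta$ at no cost.

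Next I would invoke the conjecture. Applying Conjecture \ref{conjecture:BorisovNuer} to the line bundle $H_Y$ on $Y$ produces $N \in \pic(Y)$ with $(N - H_Y)^2 = (N - 2H_Y)^2 = -2$. Setting $M := \sigma^{*} N$ gives a $\theta$-invariant class, $\theta^{*} M = M$, again because $\sigma \circ \theta = \sigma$. It then remains only to check that $(X, H_X, M)$ lies in $\mathcal{BN}_g = \mathcal{NL}_{2g-3,\,3g-3}$, and this is exactly the numerical translation already displayed in the text: using that $\sigma$ is an \'etale double cover, so $\sigma^{*} D \cdot \sigma^{*} D' = 2\,(D \cdot D')$, the two equations $(N - H_Y)^2 = (N - 2H_Y)^2 = -2$ are equivalent to the system $H_X^2 = 2g-2$, $M^2 = 4g-8$, $H_X \cdot M = 3g-3$, which is precisely the defining incidence of the Borisov-Nuer divisor. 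Hence all three clauses defining $\Xi_g$ hold, so $(X, H_X) \in \Xi_g$ and $\Sigma_g \subseteq \Xi_g$ follows. Note that to cover all of $\Sigma_g$ one applies the conjecture across every polarization type $h$ contributing to the union, which is legitimate since Conjecture \ref{conjecture:BorisovNuer} is stated universally.

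As for where the difficulty lies, in this direction there is essentially no obstacle beyond bookkeeping, since the argument is formal once the Enriques-to-$K3$ dictionary of the preceding paragraphs is in place; the genuine mathematical weight rests entirely on Conjecture \ref{conjecture:BorisovNuer} itself. The one point deserving a word of care is that a \emph{single} involution $\theta$ must simultaneously witness $\theta^{*} H_X = H_X$ and $\theta^{*} M = M$, which is automatic here because both $H_X$ and $M$ are pulled back along the same quotient map $\sigma$. It is also worth observing that the conjecture is applied to $(Y, H_Y)$ with no unnodality hypothesis, and that the descent of $H_X$ determines $H_Y$ only up to the ambiguity $H_Y \leftrightarrow H_Y \otimes K_Y$; both points are immaterial, since the relevant conditions are purely numerical and $K_Y$ is numerically trivial.
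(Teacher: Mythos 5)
Your argument is correct and is essentially the reasoning the paper itself gives: the statement is presented as a consequence of Conjecture \ref{conjecture:BorisovNuer}, and the paper's justification is exactly the translation you carry out — pulling back $N$ along the \'etale double cover, noting $\theta^{*}$-invariance via Lemma \ref{Lemma:Horikawa}, and converting $(N-H_Y)^2=(N-2H_Y)^2=-2$ into the intersection data $H_X^2=2g-2$, $M^2=4g-8$, $H_X\cdot M=3g-3$ defining $\mathcal{BN}_g$. Your explicit handling of the one-inclusion reduction and of the $H_Y\leftrightarrow H_Y\otimes K_Y$ ambiguity fills in details the paper leaves implicit, but the route is the same.
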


Since the locus $\Xi_g$ is contained in the Borisov-Nuer divisor $\mathcal {BN}_{g}$ by definition, this conjecture admits the following much weaker version:

\begin{ques}
Is $\Sigma_g$ contained in $\mathcal{BN}_g$?
\end{ques}

At the moment, the Borisov-Nuer conjecture is known for only a few examples: Fano polarization $\Delta$ and its multiple $k \Delta$ by Borisov and Nuer themselves \cite[Theorem 2.4]{BN18}, and a degree $4$ polarization \cite[Theorem 13]{AK17}. In particular, $\Xi_g$ is nonempty when $g=5$ or $g=11$. To have a better understanding, it is worthwhile to observe $\Xi_g$, and to collect more evidences for the Borisov-Nuer conjecture.

In this paper, we construct  examples of points in $\Xi_g$ for various values of $g$.
Suppose that Conjecture \ref{conjecture:BorisovNuer} holds for a numerically polarized Enriques surface $(Y, [H_Y])$ with $H_Y$ of type $h$. Since all the Enriques surfaces $Y$ have the same lattice structure $\num (Y) \simeq U \oplus E_8(-1)$, we immediately have that Conjecture \ref{conjecture:BorisovNuer} holds for every numerically polarized Enriques surface $(Y^{\prime}, [H_Y^{\prime}]) \in {\mathcal M}_{En,h}^a$. Hence, it suffices to construct only one numerically polarized Enriques surface $(Y, h)$ from the moduli space $\mathcal M_{En,h}^a$ which makes Conjecture \ref{conjecture:BorisovNuer} hold. The key ingredient is a Jacobian Kummer surface $X=Km(C)$ of a general curve $C$ of genus $2$, similar as in \cite{AK17}. Such a Jacobian Kummer surface has plenty of technical merits, for instance:

\begin{itemize}
\item $X$ has a fixed-point-free involution $\theta$, that is, $X$ is the $K3$ cover of some Enriques surface $Y$; 
\item intersection theory of $X$ is well-understood;
\item the pullback homomorphism $\theta^{*} : \pic (X) \to \pic(X)$ is well-understood;
\item the Picard number $\rho(X)$ is quite big, so there are more chances to find a certain line bundle.
\end{itemize}

The main result of this paper is the nonemptyness of the locus $\Xi_g$ for various values $g$ as follows, see Theorem \ref{thm:Xi_g is nonempty}:

\begin{thm*}
When $g-1$ is divisible by $4$, the locus $\Xi_g$ is nonempty. In other words, for any given $k>0$ and any Enriques surface $Y$, there is an ample and globally generated line bundle $H_Y$ and a line bundle $N$ on $Y$ such that $H_Y^2 = 4k$ and $(N-H_Y)^2 = (N-2H_Y)^2 = -2$.
\end{thm*}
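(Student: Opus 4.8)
The plan is to split the statement into a purely lattice-theoretic \emph{numerical} part, which is elementary, and a \emph{geometric} positivity part, which carries the real content and is where the Jacobian Kummer surface enters. For the reduction, set $A=N-H_Y$ and $B=N-2H_Y$: the two equations $(N-H_Y)^2=(N-2H_Y)^2=-2$ say exactly that $A$ and $B$ are $(-2)$-classes, while $H_Y=A-B$ and $N=2A-B$. Conversely, if $A,B$ are $(-2)$-classes with $A-B=H_Y$, then
\[
A\cdot B=\tfrac12\bigl(A^2+B^2-(A-B)^2\bigr)=\tfrac12\bigl(-4-H_Y^2\bigr),
\]
so the constraint $H_Y^2=4k$ forces $A\cdot B=-2(k+1)$ automatically. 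Hence it suffices to exhibit an ample, globally generated $H_Y$ with $H_Y^2=4k$ admitting a $(-2)$-class $B$ with $H_Y\cdot B=-\tfrac12 H_Y^2=-2k$; then $A:=H_Y+B$ is automatically a $(-2)$-class and $N:=2H_Y+B$ solves Conjecture \ref{conjecture:BorisovNuer}.

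The numerical part is immediate. In $\num(Y)\simeq U\oplus E_8(-1)$, write $U=\langle e,f\rangle$ with $e^2=f^2=0$, $e\cdot f=1$, and fix a root $v\in E_8(-1)$. Then
\[
H_Y=(2k+1)e+f+v,\qquad B=-2(k+1)e-v
\]
satisfy $H_Y^2=4k$, $B^2=-2$, and $H_Y\cdot B=-2k$, so $N=2H_Y+B=2ke+2f+v$ gives $N^2=8k-2$ and $N\cdot H_Y=6k$. Since $\pic(Y)\twoheadrightarrow\num(Y)$ on every Enriques surface, these classes are represented by honest line bundles, so the numerical content of the conjecture already holds on any $Y$.

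What remains is to make $H_Y$ ample and globally generated, and this is where I would invoke the Jacobian Kummer surface $X=Km(C)$ of a general genus-$2$ curve $C$. Using its explicit Picard lattice (the $16_{6}$ configuration of nodes and tropes, $\rho(X)=17$) and a fixed-point-free involution $\theta$ with known action $\theta^{*}$ on $\pic(X)$, I would realize $H_Y$ and $N$ as descents of $\theta$-invariant classes $H_X=\sigma^{*}H_Y$ and $M=\sigma^{*}N$ on the Enriques quotient $Y=X/\theta$, and choose the decomposition so that, on this model, $H_Y$ is nef, meets every $(-2)$-curve strictly positively, and has $\phi(H_Y)\ge 2$. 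The Cossec--Dolgachev base-point-freeness criterion for Enriques surfaces then yields global generation, and positivity against the finitely many explicit $(-2)$-curves yields ampleness. By construction the triple $(X,H_X,M)$ lies in $\mathcal{BN}_g$ with $\theta^{*}H_X\simeq H_X$ and $\theta^{*}M\simeq M$, so it is a point of $\Xi_g$; hence $\Xi_g\neq\emptyset$ whenever $4\mid(g-1)$.

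The main obstacle is controlling positivity uniformly in $k$. A naive choice such as the one above meets the isotropic class $e$ in degree $H_Y\cdot e=1$, so $\phi(H_Y)=1$ and it is not globally generated. One must therefore choose the $(-2)$-class $B$ more carefully---equivalently, move $H_Y$ within its isometry orbit so as to balance its $e$- and $f$-coefficients---to achieve $\phi(H_Y)\ge 2$ while preserving $H_Y\cdot B=-2k$, and check that the resulting class avoids every isotropic class of degree one and every $(-2)$-curve of degree zero on the chosen Kummer model, for all $k\ge 1$. Finally, since all Enriques surfaces carry the same lattice $U\oplus E_8(-1)$ and the polarization type $h$ is fixed, the transfer principle recorded above propagates the conclusion from this single Kummer-theoretic surface to every $(Y',[H_{Y'}])\in\mathcal M_{En,h}^a$, giving the statement for any Enriques surface.
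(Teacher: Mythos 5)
Your reduction is correct and clean: setting $A=N-H_Y$, $B=N-2H_Y$, the conjecture for $(Y,H_Y)$ is equivalent to finding a $(-2)$-class $B$ with $H_Y\cdot B=-2k$, and your lattice vectors in $U\oplus E_8(-1)$ do satisfy $H_Y^2=4k$, $B^2=-2$, $H_Y\cdot B=-2k$. But the statement you are proving is not merely the numerical identity; it asserts the existence of an \emph{ample and globally generated} $H_Y$ of degree $4k$ admitting such a $B$ (this is exactly what makes the corresponding point land in $\Xi_g\subset\mathcal F_g$). That positivity step is the actual content of the theorem, and your proposal does not carry it out: your explicit class $H_Y=(2k+1)e+f+v$ has $\phi(H_Y)=1$, hence is not globally generated, as you yourself note, and the remedy is only described in the conditional (``I would realize\dots'', ``One must therefore choose $B$ more carefully\dots''). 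No replacement class is exhibited, no verification of $\phi\ge 2$ or of nefness/ampleness is given for any $k$, and no specific $\theta$-invariant lift to the Kummer surface is produced. As it stands this is a correct reformulation plus a plan, not a proof.

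For comparison, the paper resolves precisely this point by an explicit uniform choice on the Jacobian Kummer surface: $H_X=(k+1)L-\tfrac{k}{2}(F_1+F_2)-\tfrac12(F_3+F_4)$, which is rewritten as the sum of the very ample class $2L-\tfrac12(F_1+F_2+F_3+F_4)$ (the $(2,2,2)$ model in $\p^5$) and $(k-1)$ times the nef class $L-\tfrac12(F_1+F_2)$ of an elliptic fibration, so ampleness and global generation hold for all $k$ at once; the companion class is $M=(2k+1)L-k(F_1+F_2)-F_3$, with $\theta$-invariance and integrality checked via Lemma \ref{Lemma:ConditionThetaInvariantLB} and the Diophantine reduction of Proposition \ref{Prop:SuffConditionDiophantineEquationOnK3}. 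If you want to salvage your lattice-theoretic route without the Kummer machinery, you would need to (i) replace your $H_Y$ by a class with balanced isotropic degrees so that $\phi(H_Y)\ge2$ while retaining a $(-2)$-class $B$ with $H_Y\cdot B=-2k$, and (ii) justify ampleness, e.g.\ by working on an unnodal Enriques surface where there are no $(-2)$-curves to obstruct it; neither step is in your write-up, and until one of them is done explicitly and uniformly in $k$, the argument has a genuine gap.
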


The outline of the paper is the following. In Section \ref{Section:Preliminaries}, we review some basic facts on Enriques surfaces, Jacobian Kummer surfaces as $K3$ covers of Enriques surfaces, and line bundles. We also fix the notation we use. In Section \ref{Section:K3Cover}, we describe a construction of a polarized Enriques surface which verifies the Borisov-Nuer conjecture using a Jacobian Kummer surface and we provide a few more examples in the case when $(g-1)$ is not divisible by $4$.

\section{Preliminaries}\label{Section:Preliminaries}

We recall some basic facts on Enriques surfaces and Jacobian Kummer surfaces. As the above discussion indicates, we translate the Borisov-Nuer conjecture and the equation (\ref{equation:UlrichOnUnnodalEnriques}) on an Enriques surface $Y$ in terms of line bundles on its $K3$ cover $X$. To construct an Enriques surface from its $K3$ cover, we need a $K3$ surface $X$ together with a fixed-point-free involution $\theta$ so that the quotient $X/\theta$ becomes an Enriques surface. Thanks to the following theorem of Keum, we pick algebraic Kummer surfaces as candidates:

\begin{lem}[{\cite[Theorem 2]{Keu90}}]
An algebraic Kummer surface is a K3 cover of an Enriques surface.
\end{lem}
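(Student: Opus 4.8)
The plan is to produce the required fixed-point-free involution lattice-theoretically and then realize it geometrically via the global Torelli theorem, rather than by writing down an explicit automorphism. Write $X=\mathrm{Km}(A)$ for a projective abelian surface $A$. A fixed-point-free involution $\theta$ with $X/\theta$ an Enriques surface is exactly an \emph{Enriques involution}, and an anti-symplectic involution of a K3 surface is free precisely when its invariant sublattice of $H^2(X,\z)$ is isometric to the Enriques lattice $U(2)\oplus E_8(-2)$; in Nikulin's classification of $2$-elementary lattice involutions this is the unique hyperbolic case with invariants $(r,a,\delta)=(10,10,0)$, for which the fixed locus is empty. Thus it suffices to exhibit an isometry $\theta^{*}$ of $H^2(X,\z)$ which is an involution, is a Hodge isometry with $\theta^{*}\omega_X=-\omega_X$, is effective (sends the ample cone to itself), and whose $(+1)$-eigenlattice is $\cong U(2)\oplus E_8(-2)$. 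Global Torelli then yields an automorphism $\theta$ of $X$ inducing $\theta^{*}$, which is an involution, and the eigenlattice computation forces it to be free.

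The heart of the matter is the construction of $\theta^{*}$, and here one exploits the very explicit shape of the cohomology of a Kummer surface: the transcendental lattice is $T(X)\cong T(A)(2)$, while the Néron--Severi group $\operatorname{NS}(X)$ contains the rank-$16$ Kummer lattice $\mathbb{K}$ together with the classes $\operatorname{NS}(A)(2)$ coming from $A$. I would define $\theta^{*}$ to act as $+1$ on a chosen copy of $U(2)\oplus E_8(-2)$ sitting inside $\operatorname{NS}(X)$ and as $-1$ on its orthogonal complement in $H^2(X,\z)$. Since $\omega_X$ is transcendental it lies in the $(-1)$-part, so $\theta^{*}\omega_X=-\omega_X$ and the Hodge condition hold automatically. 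The copy of the Enriques lattice is assembled from $\mathbb{K}$, whose rank $16$ and rich $2$-elementary discriminant form give more than enough room to realize the $E_8(-2)$ summand and a hyperbolic $U(2)$, independently of $A$.

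Effectivity is arranged afterwards. Because the invariant and anti-invariant lattices carry the correct signatures $(1,9)$ and $(2,10)$, an ample class can be selected in the $(+1)$-eigenspace, and if necessary $\theta^{*}$ is corrected by the reflection action of the Weyl group generated by $(-2)$-classes so that it preserves the ample cone; this is precisely the input global Torelli needs to promote $\theta^{*}$ to an honest automorphism. Once $\theta$ exists, identifying its invariant lattice with $U(2)\oplus E_8(-2)$ and invoking Nikulin's freeness criterion finishes the argument: $Y:=X/\theta$ is smooth with $2K_Y=\mathcal O_Y$ and $p_g=q=0$, hence Enriques, and $X\to Y$ is its K3 cover.

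The main obstacle is the uniform construction in the second step: one must embed the Enriques lattice primitively into $\operatorname{NS}(\mathrm{Km}(A))$ and verify that the induced map glues to an isometry of all of $H^2(X,\z)$ \emph{for every} algebraic $A$, that is, across all Picard ranks and all polarization types of $A$. This is a discriminant-form compatibility computation, and it is where the real work lies. One is tempted to bypass lattices and simply take $\theta$ to be translation by a nonzero $2$-torsion point $t\in A[2]$, which commutes with $x\mapsto -x$ and descends to $\mathrm{Km}(A)$; but the descended involution fixes the images of the $4$-torsion points solving $2x=t$, so it is not free, and genuine lattice input is unavoidable to obtain a free involution for all $A$ simultaneously.
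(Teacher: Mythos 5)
Note first that the paper does not prove this lemma at all: it is quoted verbatim from Keum \cite[Theorem 2]{Keu90}, so the only comparison available is with Keum's original argument --- which is indeed a lattice-theoretic Torelli argument of the general shape you describe. Measured against that, your proposal has the right scaffolding (Nikulin's invariants $(r,a,\delta)=(10,10,0)$ for a free anti-symplectic involution, strong Torelli to realize an involutive effective Hodge isometry, and the correct observation that translation by a $2$-torsion point $t$ descends to an involution fixing the $16$ images of the points with $2x=t$, so the naive approach fails), but it breaks at the decisive point. The step ``effectivity is arranged afterwards'' is not available. If the orthogonal complement $N$ of your chosen copy $S\cong U(2)\oplus E_8(-2)$ inside $\operatorname{NS}(X)$ contains a class $\delta$ with $\delta^2=-2$ --- and for a Kummer surface, whose N\'eron--Severi group is generated up to finite index by sixteen $(-2)$-classes, an unconstrained choice of $S$ will leave many such $\delta$ in $N$ --- then $\pm\delta$ is effective by Riemann--Roch while $\theta^*\delta=-\delta$, so $\theta^*$ sends an effective class to an anti-effective one, and moreover no class of $S$ can be ample, since every $h\in S$ satisfies $h\cdot\delta=0$. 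Correcting by a Weyl group element $w$ does not repair this: $w\theta^*$ is in general no longer an involution, and even where an involution can be extracted its $(+1)$-eigenlattice changes, so the eigenlattice computation that was supposed to force freeness evaporates. The criterion Keum actually proves and applies builds this in from the start: $X$ admits an Enriques involution if and only if there is a primitive embedding of $T(X)$ into $U\oplus U(2)\oplus E_8(-2)$ whose orthogonal complement contains \emph{no} vector of square $-2$. Your proposal never imposes, let alone verifies, this no-$(-2)$-vector condition, and it is exactly the hard content of the theorem.

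Two further points of calibration. What you flag as ``where the real work lies'' --- gluing $+1$ on $S$ and $-1$ on $S^{\perp}$ to an integral isometry of $H^2(X,\z)$ --- is in fact automatic: since $S$ is $2$-elementary, $2x\in S\oplus S^{\perp}$ for every $x\in H^2(X,\z)$, so writing $x=s+t$ one gets $\theta^*x=x-2t\in H^2(X,\z)$. The genuinely uniform input for all abelian surfaces $A$ is instead the $2$-scaling of the transcendental lattice, $T(\mathrm{Km}(A))\cong T(A)(2)$: this is what lets one choose the primitive embedding of $T(X)$ so that its orthogonal complement is itself of the form $L(2)$ for an even lattice $L$, whence all its norms are divisible by $4$ and vectors of square $-2$ are impossible, independently of the Picard rank or polarization type of $A$. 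So your instinct that the $2$-elementary structure of the Kummer situation is the engine is correct, but as written the proposal does not prove the lemma: the interaction between effectivity and freeness is precisely where it fails, and closing it requires the $(-2)$-vector condition and the $T(A)(2)$ mechanism rather than a Weyl-group correction.
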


When the covering map $\sigma : X \to X/\theta = Y$ of an Enriques surface is fixed, we also need to ask which line bundles on $X$ are pullbacks of some line bundles on $Y$. The answer is also well-known, thanks to Horikawa.

\begin{lem}[{\cite[Theorem 5.1]{Hor78}}]\label{Lemma:Horikawa}
Let $X$ be a $K3$ surface, $\theta : X \to X$ be a fixed-point-free involution, and $\sigma : X \to Y = X/\theta$ be the $2:1$ {\'e}tale cover. Then the image of the map $\sigma^{*} : \pic (Y) \to \pic (X)$ is the set of line bundles $M$ in $X$ such that $\theta^{*}M \simeq M$.
\end{lem}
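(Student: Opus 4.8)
The plan is to prove the two inclusions separately: one is formal, and the other carries all the content.

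\medskip

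\noindent\emph{The easy inclusion.} Since $\theta$ is the nontrivial deck transformation of the double cover $\sigma$, we have $\sigma\circ\theta=\sigma$. Hence for any $L\in\pic(Y)$,
\[
\theta^{*}\sigma^{*}L=(\sigma\circ\theta)^{*}L=\sigma^{*}L ,
\]
so every line bundle in $\im(\sigma^{*})$ is $\theta$-invariant, giving $\im(\sigma^{*})\subseteq\{M:\theta^{*}M\simeq M\}$. The work is to prove the reverse inclusion, i.e.\ that every $\theta$-invariant line bundle descends.

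\medskip

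\noindent\emph{The descent setup.} The map $\sigma$ is an \'etale Galois cover with group $G=\z/2=\langle\theta\rangle$, so by faithfully flat (Galois) descent the pullback $\sigma^{*}$ identifies $\pic(Y)$ with the group of $G$-linearized line bundles on $X$. For $G=\z/2$ a linearization of $M$ is an isomorphism $\phi:\theta^{*}M\to M$ subject to the single cocycle relation $\phi\circ\theta^{*}\phi=\id_{M}$ (here $\theta^{*}\theta^{*}M=M$ because $\theta^{2}=\id$). Thus, to show a given $M$ with $\theta^{*}M\simeq M$ lies in the image of $\sigma^{*}$, I would only need to upgrade \emph{some} isomorphism $\theta^{*}M\simeq M$ into one satisfying this cocycle condition.

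\medskip

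\noindent\emph{Constructing the linearization (the crux).} Choose any isomorphism $\phi:\theta^{*}M\to M$. The composite $\phi\circ\theta^{*}\phi$ is an automorphism of $M$; since $X$ is connected and proper, $\aut(M)=H^{0}(X,\o_{X}^{*})=\c^{*}$, so this composite is multiplication by a scalar $\lambda\in\c^{*}$. Replacing $\phi$ by $c\phi$ with $c\in\c^{*}$ changes the composite to $c^{2}\lambda$ (constants commute with $\theta^{*}$, as $\theta$ acts trivially on $H^{0}(X,\o_{X}^{*})$); choosing $c$ with $c^{2}=\lambda^{-1}$, which is possible because $\c$ is algebraically closed, yields a $\phi$ satisfying $\phi\circ\theta^{*}\phi=\id_{M}$. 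Descending $M$ along this linearization produces $L\in\pic(Y)$ with $\sigma^{*}L\cong M$, completing the reverse inclusion. The step I expect to be the genuine obstacle is exactly this one: arranging the cocycle relation, which is the vanishing of the descent obstruction; everything else is formal.

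\medskip

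\noindent\emph{A conceptual cross-check.} The same statement is encoded in the low-degree terms of the Hochschild--Serre sequence of the $G$-cover,
\[
0\to H^{1}(G,\c^{*})\to\pic(Y)\xrightarrow{\ \sigma^{*}\ }\pic(X)^{\theta}\to H^{2}(G,\c^{*}),
\]
where $\pic(X)^{\theta}$ denotes the $\theta$-invariant classes. The surjectivity I must establish is precisely the vanishing $H^{2}(\z/2,\c^{*})=\c^{*}/(\c^{*})^{2}=0$, which again reflects that every scalar is a square; the kernel $H^{1}(\z/2,\c^{*})=\{\pm 1\}$ matches the two admissible descents $L$ and $L\otimes K_{Y}$, consistent with $K_{Y}$ being $2$-torsion. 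I would present the cocycle argument as the main proof and invoke this sequence only as a sanity check on surjectivity and on the size of the kernel.
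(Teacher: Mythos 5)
Your argument is correct and complete. Note, however, that the paper offers no proof of this statement at all: it is quoted as \cite[Theorem 5.1]{Hor78}, so there is no in-paper argument to compare against. What you have written is the standard descent proof that fills in the citation: the easy inclusion from $\sigma\circ\theta=\sigma$; the identification of $\pic(Y)$ with $\theta$-linearized bundles on $X$ via \'etale (Galois) descent, which is legitimate precisely because $\theta$ is fixed-point-free so that $\sigma$ is an \'etale $\mathbb{Z}/2$-torsor; and the key scalar adjustment $\phi\mapsto c\phi$ with $c^{2}=\lambda^{-1}$, where $\phi\circ\theta^{*}\phi=\lambda\in\mathrm{Aut}(M)=\mathbb{C}^{*}$ uses properness and connectedness of the K3 surface. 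This correctly isolates the only obstruction, the class of $\lambda$ in $H^{2}(\mathbb{Z}/2,\mathbb{C}^{*})=\mathbb{C}^{*}/(\mathbb{C}^{*})^{2}=0$. Your Hochschild--Serre cross-check is also consistent with how the paper actually uses the lemma: the kernel $H^{1}(\mathbb{Z}/2,\mathbb{C}^{*})=\{\pm 1\}$ corresponds exactly to the two descents $N$ and $N\otimes K_{Y}$ appearing in the paper's remark that $\sigma_{*}M\simeq N\oplus(N\otimes K_{Y})$, with $K_{Y}$ the $2$-torsion class killed by $\sigma^{*}$. The one point worth making explicit if you write this up: the constant $c$ commutes with $\theta^{*}$ because $\theta$ acts trivially on global constants, which you did note, and the cocycle condition $\phi\circ\theta^{*}\phi=\mathrm{id}_{M}$ uses the canonical identification $\theta^{*}\theta^{*}M=M$ coming from $\theta^{2}=\mathrm{id}$; both are fine as stated.
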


Next, we recall the construction of a Jacobian Kummer surface and intersection theory over it. Let $C$ be a generic curve of genus $2$. Its Jacobian variety $\mathcal A = J(C)$ is an Abelian surface with N{\'e}ron-Severi group $NS(\mathcal A) = \mathbb{Z} \cdot [\Theta]$ with $\Theta^2 = 2$. Note that $\mathcal A$ has a natural involution $\iota$ with $16$ fixed point. The complete linear system $|2 \Theta|$ defines a morphism to $\mathbb{P}^3$, which factors through the singular quartic $\mathcal A / \iota$ (Kummer quartic) with $16$ ordinary double points. The Kummer surface $X = Km(\mathcal A)$ is defined as the minimal desingularization of $\mathcal A/\iota$. Throughout the rest of the paper, we fix the notations as follows.

\begin{notation}
We follow the notation as in \cite{AK17}.
\begin{itemize}
\item $C$ : a generic curve of genus 2 with 6 Weierstrass points $p_1, \cdots, p_6 \in C$;
\item $X = Km (C)$ : Jacobian Kummer surface associated to $C$, which is the minimal desingularization of $J(C)/\iota$;
\item $\theta : X \to X$ : a fixed-point-free involution so called ``switch'' induced by the even theta characteristic $[p_4+p_5-p_6]$;
\item $\sigma : X \to Y = X/\theta$ : the quotient map so that $Y$ is an Enriques surface;
\item $L$ : the line bundle induced by the hyperplane section of the singular quartic $J(C)/\iota \subseteq \mathbb{P}^3$;
\item $E_0, E_{ij} ~ (1 \le i < j \le 6)$ : sixteen $(-2)$-curves called \emph{nodes};
\item $T_i ~ (1 \le i \le 6 ), T_{ij6} ~ (1 \le i < j \le 5)$ : sixteen $(-2)$-curves called \emph{tropes}.
\end{itemize}
\end{notation}

\noindent Note that $L^2 = 4, L \cdot E_0 = L \cdot E_{ij} = 0$, and two distinct nodes do not intersect. 

Let us describe the nodes and the tropes more precisely. Following the notation in \cite{Oha09}, the 16 nodes are labeled by the corresponding 2--torsion points in the Jacobian $\mathcal A=J(C)$: 
\begin{eqnarray*}
E_0 & = & \text{ node corresponding to } [0] \in \mathcal{A}; \\
E_{ij}=E_{[p_i - p_j]} & = & \text{ node corresponding to } [p_i - p_j] \in \mathcal{A}, 1 \le i < j \le 6.
\end{eqnarray*}
The tropes are labeled using their associated theta--characteristics of $C$ \cite{Oha09}, e.g. $T_i=T_{[p_i]}$ corresponds to $[p_i]$ and $T_{ijk}=T_{[p_i+p_j-p_k]}$ corresponds to $[p_i+p_j-p_k]$ for any $i<j<k$. Note that $T_{ijk}=T_{\ell m n}$ if $\{i,j,k\}\cup \{\ell, m,n\}=\{1,\ldots,6\}$.

Also note that the pullback $\theta^{*}$ swaps the nodes $E_{\alpha}$ and the tropes $T_{\alpha+\beta}$ in the following way, cf. \cite{Muk12} and \cite[Section 4, Section 5]{Oha09}:

\[
\begin{array}{|ccc||ccc|}
\hline
\text{Nodes} & \ & \text{Tropes} &  \text{Nodes} & \ & \text{Tropes} \\
\hline
E_0 & \leftrightarrow & T_{456}  & E_{25} & \leftrightarrow & T_{246} \\
E_{12} & \leftrightarrow & T_{3} &  E_{26} & \leftrightarrow & T_{136} \\
E_{13} & \leftrightarrow & T_{2} & E_{34} & \leftrightarrow & T_{356} \\
E_{14} & \leftrightarrow & T_{156} & E_{35} & \leftrightarrow & T_{346} \\
E_{15} & \leftrightarrow & T_{146} & E_{36} & \leftrightarrow & T_{126} \\
E_{16} & \leftrightarrow & T_{236} & E_{45} & \leftrightarrow & T_{6} \\
E_{23} & \leftrightarrow & T_{1} & E_{46} & \leftrightarrow & T_{5} \\
E_{24} & \leftrightarrow & T_{256} & E_{56} & \leftrightarrow & T_{4} \\
\hline
\end{array}
\]
\\
where the corresponding tropes are 
\begin{eqnarray*}
T_i & = & \frac{1}{2} (L - E_0 - \sum_{k \neq i} E_{ik})
\end{eqnarray*}
for $1\le i\le 6$ and 
\begin{eqnarray*}
T_{ij6} & = & \frac{1}{2} (L - E_{i6} - E_{j6} - E_{ij} - E_{\ell m}-E_{mn}-E_{\ell n})
\end{eqnarray*}
for $1 \le i < j \le 5$, where $\{l,m,n \}$ is the complement of $\{i, j\}$ in $\{1, 2, 3, 4, 5\}$ (see \cite[Lemma 4.1]{Oha09}).

It is well-known that $\{ E_0, E_{ij}, T_i, T_{ij6} \}$ spans $\pic(X)$ \cite[Lemma 3.1]{Keu97}, and hence $\{ L, E_0, E_{ij} \}$ spans $\pic(X) \otimes \frac{1}{2} \mathbb{Z}$ if we allow $\frac{1}{2} \mathbb{Z}$ coefficients. For simplicity, we mostly consider a linear combination of $L, E_0, E_{ij}$ in $\frac{1}{2} \mathbb{Z}$ coefficients, however, we have to carefully choose the coefficients so that the linear combination gives an element in $\pic(X)$.

\section{Construction using K3 covers}\label{Section:K3Cover}

Let $(Y, H_Y)$ be a polarized Enriques surface, and let $\sigma : X \to Y=X/\theta$ be its $K3$ cover. Suppose it verifies Conjecture \ref{equation:UlrichOnUnnodalEnriques}, that is, $Y$ has a line bundle $N$ which fits into the equation (\ref{equation:UlrichOnUnnodalEnriques}). The equation (\ref{equation:UlrichOnUnnodalEnriques}) can be completely translated into the numerical conditions on its $K3$ cover. Namely, we are interested in line bundles $M \in \sigma^{*} \pic(Y) \subseteq \pic(X)$ which verifies the equation 
\begin{equation}\label{equation:NumericalConditionOnK3}
(M-H_X)^2 = (M-2H_X)^2 = -4
\end{equation}
where $H_X := \sigma^{*} H_Y$. Note that if $H_Y$ is ample and globally generated, then $H_X$ is also ample and globally generated, and vice versa.

Now let $X$ be a Jacobian Kummer surface associated to a generic curve $C$ of genus $2$. As mentioned in the previous section, some line bundles in $\pic(X)$ require  rational coefficients in $\frac{1}{2}\mathbb{Z}$ when we write it as linear combinations of $L$ and nodes $E_{ij}$. One typical example is called an even eight:

\begin{lem}
The set of $8$ nodes $\{ E_0, E_{16}, E_{23}, E_{24}, E_{25}, E_{34}, E_{35}, E_{45} \}$ forms an even eight, that is, $\left( E_0 + E_{16} + E_{23} + E_{24} + E_{25} + E_{34} + E_{35} + E_{45} \right)$ is divisible by $2$ in $\pic(X)$.
\end{lem}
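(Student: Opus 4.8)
The plan is to produce an explicit class $D\in\pic(X)$ with $2D=S$, where $S:=E_0+E_{16}+E_{23}+E_{24}+E_{25}+E_{34}+E_{35}+E_{45}$. Since $\{E_0,E_{ij},T_i,T_{ij6}\}$ generates $\pic(X)$ over $\z$ and the tropes are given by the explicit half-integral formulas recorded above, this is equivalent to writing the rational class $\tfrac12 S$ as an integral combination of these generators. I would phrase it as a linear-algebra problem over $\f_2$: in the rational basis $\{L,E_0,E_{ij}\}$ every generator has coordinates in $\tfrac12\z$, so I record for each the \emph{fractional part} of its coordinate vector, an element of $\f_2^{17}$. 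The nodes $E_0,E_{ij}$ contribute $0$, while each trope $T_i$ or $T_{ij6}$ contributes a vector with a $1$ in the $L$-slot and $1$'s in exactly six node-slots. The target $\tfrac12 S$ has fractional vector $v_S$ with $1$'s in the eight node-slots indexing $S$ and a $0$ in the $L$-slot. Thus $\tfrac12 S\in\pic(X)$ \emph{if and only if} $v_S$ lies in the $\f_2$-span of the trope vectors, and since every trope vector has a $1$ in the $L$-slot whereas $v_S$ does not, any solution must use an \emph{even} number of tropes.

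The key step is to exhibit such a combination. First I would add the trope vector of $T_1$ to eliminate $E_0$: since $E_0\equiv L+E_{12}+E_{13}+E_{14}+E_{15}+E_{16}\pmod{2\pic(X)}$, the sum $v_S+\mathrm{frac}(T_1)$ is supported exactly on $L$ together with the ten nodes $E_{ab}$ with $a,b\in\{1,2,3,4,5\}$. It then remains to hit these ten ``no-$6$'' nodes, with the $L$-slot, using tropes of type $T_{ij6}$ only (these avoid $E_0$). Recording the chosen index pairs $\{i,j\}\subseteq\{1,\dots,5\}$ as the edge set of a graph on five vertices, the two index-$6$ nodes of each $T_{ij6}$ cancel in pairs precisely when every vertex has even degree, i.e. when the graph is Eulerian. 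I would take the pentagon on $\{1,2,3,4,5\}$ with edge set $\{1,2\},\{2,3\},\{3,4\},\{4,5\},\{1,5\}$, that is $\Phi:=T_1+T_{126}+T_{236}+T_{346}+T_{456}+T_{156}$; the Eulerian condition kills all index-$6$ contributions, and a direct tally shows each of the ten no-$6$ nodes and the class $L$ occur with odd coefficient, matching $v_S$ exactly.

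Granting this, the argument closes cleanly: $\Phi\in\pic(X)$ as a sum of tropes, and one checks that $\Phi+\tfrac12 S$ has purely integral coordinates in $\{L,E_0,E_{ij}\}$ --- explicitly it equals $3L-2E_{12}-E_{13}-E_{14}-2E_{15}-E_{16}-E_{23}-E_{26}-E_{34}-E_{36}-E_{45}-E_{46}-E_{56}$ --- so it lies in $\pic(X)$. Hence $\tfrac12 S=(\Phi+\tfrac12 S)-\Phi\in\pic(X)$, which gives $2\mid S$. The main obstacle is exactly this middle step: locating a family of tropes whose fractional vectors sum to $v_S$, i.e. solving the $\f_2$-membership problem; the Eulerian-cycle packaging is what turns the search into a one-line combinatorial choice, after which the agreement of node coefficients is routine bookkeeping with the half-integral formulas. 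As a sanity check and an alternative route, I would note that the complementary eight nodes $E_{12}+E_{13}+E_{14}+E_{15}+E_{26}+E_{36}+E_{46}+E_{56}$ are visibly even via $T_2+T_3+T_4+T_5$, so the claim for $S$ is equivalent to the classical $2$-divisibility of the sum of all sixteen nodes.
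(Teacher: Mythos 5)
Your argument is correct; I checked the coefficient tally for $\Phi=T_1+T_{126}+T_{236}+T_{346}+T_{456}+T_{156}$ and the displayed integral class $\Phi+\tfrac12 S$ is right, so the conclusion $\tfrac12 S\in\pic(X)$ follows. The underlying strategy is the same as the paper's --- exhibit $\tfrac12 S$ as an integral combination of the generators $\{L, E_0, E_{ij}, T_i, T_{ij6}\}$ via the half-integral trope formulas --- but the realization differs. The paper produces a much shorter witness using only \emph{two} tropes: one checks directly that $\operatorname{frac}(T_4)+\operatorname{frac}(T_{146})$ is exactly your target vector $v_S$ (the $L$-slots cancel and the two overlapping nodes $E_{14}, E_{46}$ drop out mod $2$), which yields the one-line identity $\tfrac12 S = L - T_4 - E_{14} - T_{146} - E_{46}$. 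Your $\f_2$-linear-algebra packaging and the Eulerian-cycle selection of tropes is more systematic --- it explains \emph{how} to find such a combination for an arbitrary even set rather than pulling one out of a hat, and the closing observation relating $S$ to the complementary eight and the classical $2$-divisibility of all sixteen nodes is a genuine independent check --- but it pays for this with a six-trope witness and more bookkeeping where two tropes suffice. Either version is a complete proof.
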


\begin{proof}
It is straightforward from a direct computation
\begin{eqnarray*}
L - T_4 -  E_{14} - T_{146} - E_{46} & = & L - \frac{1}{2} \left( L - E_0 - E_{14} - E_{24} - E_{34} - E_{45} - E_{46} \right) - E_{14} \\
& & - \frac{1}{2} \left(L - E_{14} - E_{16} - E_{46} - E_{23} - E_{25} - E_{35} \right)  - E_{46} \\
& = & \frac{1}{2} \left( E_0 + E_{16} + E_{23} + E_{24} + E_{25} + E_{34} + E_{35} + E_{45} \right).
\end{eqnarray*}
\end{proof}

Also note that the complementary set of nodes $\{ E_{12}, E_{13}, E_{14}, E_{15}, E_{26}, E_{36}, E_{46}, E_{56}\}$ also forms an even eight. Since
\begin{eqnarray*}
\theta^{*} (E_{12}+E_{15}+E_{26}+E_{56}) & = & T_3 + T_{146} + T_{136} + T_4 \\
& = & 2L - \sum_{i,j}^{16} E_{ij} + (E_{12} + E_{15} + E_{26} + E_{56})
\end{eqnarray*}
and by similar computations, grouping them by those 4 line bundles makes the problem easier. Let $F_{\bullet}$ be the sum of four nodes $E_{ij}$, namely,
\[\left\{
\begin{array}{ccccccccc}
F_1 &= & E_{12} &+&E_{15}&+& E_{26}&+&E_{56} \\
F_2 & = & E_{13} &+ &E_{14}&+& E_{36}&+&E_{46} \\
F_3 & = & E_{23} &+&E_{25}&+&E_{34}&+&E_{45} \\
F_4 & = & E_{0}  &+&E_{16}&+&E_{24}&+&E_{35} \\
\end{array}
\right.
\]
We have

\begin{eqnarray*}
\theta^{*}L & \simeq & 3L - \sum_{i,j}^{16} E_{ij} \\
\theta^{*}F_k & \simeq & 2L - \sum_{i,j}^{16} E_{ij} + F_k \text{ for each } 1 \le k \le 4.
\end{eqnarray*}

Consider a linear combination of the form $M = \alpha L - \beta_1 F_1 - \beta_2 F_2 - \beta_3 F_3 - \beta_4 F_4$ as a special case. First, we need to check when $M$ becomes a $\theta^{*}$-invariant line bundle on $X$.

\begin{lem}\label{Lemma:ConditionThetaInvariantLB}
A linear combination $M = \alpha L - \beta_1 F_1 - \beta_2 F_2 - \beta_3 F_3 - \beta_4 F_4$ is a line bundle in $\pic(X)$ such that $\theta^{*} M \simeq M$ if and only if $\beta_i \in \frac{1}{2} \mathbb{Z}$, $\beta_1 + \beta_2 \in \mathbb{Z}$, $\beta_3 + \beta_4 \in \mathbb{Z}$, and $\alpha = \beta_1 + \beta_2 + \beta_3 + \beta_4$.
\end{lem}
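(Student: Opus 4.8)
The plan is to separate the two requirements defining the set in question: invariance $\theta^{*}M\simeq M$ is governed entirely by the displayed formulas for $\theta^{*}L$ and $\theta^{*}F_k$, while membership $M\in\pic(X)$ is governed by the even eights. First I would record that $L,F_1,F_2,F_3,F_4$ are linearly independent in $\pic(X)\otimes\q$: the sixteen nodes are pairwise orthogonal $(-2)$-classes, the $F_k$ involve disjoint groups of them, and $L$ is independent of the nodes since $L^2=4$ but $L\cdot E_\bullet=0$. Hence $\alpha,\beta_1,\dots,\beta_4$ are uniquely determined and class equalities may be checked coefficientwise. Writing $S$ for the sum of all sixteen nodes, we have $S=F_1+F_2+F_3+F_4$, so the formulas read $\theta^{*}L\simeq 3L-S$ and $\theta^{*}F_k\simeq 2L-S+F_k$; substituting gives
\[
\theta^{*}M=\Big(3\alpha-2\sum_k\beta_k\Big)L+\Big(\sum_k\beta_k-\alpha\Big)S-\sum_k\beta_kF_k,
\]
and comparing coefficients in the independent set $\{L,F_1,\dots,F_4\}$ (using $S=\sum_kF_k$) shows $\theta^{*}M\simeq M$ if and only if $\alpha=\beta_1+\beta_2+\beta_3+\beta_4$. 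Under this relation $M=\sum_k\beta_k(L-F_k)$, which is the convenient form for the integrality analysis.

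For the sufficiency of the three conditions I would invoke the even eights directly: the preceding lemma gives $\tfrac12(F_3+F_4)\in\pic(X)$ and its complement gives $\tfrac12(F_1+F_2)\in\pic(X)$, hence $\tfrac12\big((L-F_1)+(L-F_2)\big)$ and $\tfrac12\big((L-F_3)+(L-F_4)\big)$ lie in $\pic(X)$. If $\beta_k=m_k/2$ with $m_1\equiv m_2$ and $m_3\equiv m_4\pmod 2$, then $\beta_1(L-F_1)+\beta_2(L-F_2)$ equals $m_1\cdot\tfrac12\big((L-F_1)+(L-F_2)\big)$ plus an integer multiple of $(L-F_2)$, and similarly for the other pair; this exhibits $M$ as an integral combination of classes of $\pic(X)$.

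For necessity, intersecting $M$ with a node $E\in F_k$ gives $M\cdot E=2\beta_k$, which must be an integer, so $\beta_k\in\tfrac12\z$. The remaining two conditions cannot be read off from intersection numbers: $\pic(X)$ is not unimodular, so integrality of all pairings $M\cdot D$ does not force $M\in\pic(X)$ (indeed every trope meets each $F_k$ in an even number of nodes, so these pairings detect nothing beyond $\beta_k\in\tfrac12\z$). Instead I would work modulo the integral sublattice generated by $L$ and the nodes and analyze the residue of $M$: writing $\beta_k=m_k/2$ and $\epsilon_k\equiv m_k\pmod 2$, membership $M\in\pic(X)$ is equivalent to $\tfrac12\sum_k\epsilon_k(L-F_k)\in\pic(X)$.

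This last point is where I expect the real work. Using $(L-F_k)^2=-4$ and $(L-F_k)\cdot(L-F_\ell)=4$ for $k\neq\ell$, the square of $\tfrac12\sum_k\epsilon_k(L-F_k)$ equals $w(w-2)$ with $w=\#\{k:\epsilon_k=1\}$; since $\pic(X)$ is even, the odd values $w=1,3$ are excluded, while $w=4$ gives $\tfrac12(F_1+F_2)+\tfrac12(F_3+F_4)\in\pic(X)$. For $w=2$ the residue lies in $\pic(X)$ exactly when the eight nodes of $F_a+F_b$ form an even eight. To decide which of the six pairs qualify, I would use the identification of the sixteen nodes with $J(C)[2]\cong(\z/2)^4$ (with $E_0\leftrightarrow 0$ and $E_{ij}\leftrightarrow[p_i-p_j]$), under which an eight-node set is an even eight precisely when it is a coset of an index-two subgroup (an affine hyperplane). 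A direct check on the corresponding two-element subsets of $\{1,\dots,6\}$ then shows that $F_1+F_2$ and $F_3+F_4$ are such cosets, whereas $F_1+F_3$, $F_1+F_4$ and their complements are not. Hence the admissible residues are exactly those with $\epsilon_1=\epsilon_2$ and $\epsilon_3=\epsilon_4$, i.e.\ $\beta_1+\beta_2\in\z$ and $\beta_3+\beta_4\in\z$, which together with $\alpha=\sum_k\beta_k$ completes the proof.
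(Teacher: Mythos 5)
Your proof is correct, and its overall skeleton matches the paper's: a direct coefficient computation (legitimized by the linear independence of $L,F_1,\dots,F_4$) yields the invariance condition $\alpha=\beta_1+\beta_2+\beta_3+\beta_4$, and the even-eight structure of the sixteen nodes governs the integrality conditions. Where you genuinely diverge is in how the key integrality fact is established. The paper disposes of it in one sentence by citing Mehran's thesis (Proposition V.6) for the statement that among the six sums $F_i+F_j$ only $F_1+F_2$ and $F_3+F_4$ are $2$-divisible in $\pic(X)$, and leaves the deduction of $\beta_1+\beta_2,\beta_3+\beta_4\in\z$ from that fact implicit. You instead reduce $M$ modulo the integral span of $L$ and the nodes, exclude an odd number of nonzero residues $\epsilon_k$ by the parity of the self-intersection $w(w-2)$ (since $\pic(X)$ is even), and settle the $w=2$ cases via Nikulin's characterization of even eights as affine hyperplanes in $J(C)[2]\cong(\z/2\z)^4$, checking which pairs $F_a+F_b$ are cosets of index-two subgroups. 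This buys a more self-contained and more transparent argument --- in particular it makes explicit the case analysis that the paper's conclusion glosses over, and your sufficiency direction (writing $M$ as an integral combination of $L-\tfrac12(F_1+F_2)$, $L-\tfrac12(F_3+F_4)$, $L$ and nodes) is cleaner than anything the paper records. The price is that you must take Nikulin's theorem on the code of even sets of nodes on faith, which is an external input of essentially the same depth as the citation it replaces; the finite coset check you defer to ``a direct check'' should be carried out for $F_1+F_3$ and $F_1+F_4$ (their complements $F_2+F_4$ and $F_2+F_3$ then follow, since $\tfrac12\sum_{\alpha}E_\alpha\in\pic(X)$). No gaps otherwise.
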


\begin{proof}
Recall that $\pic(X)$ is spanned by integral linear combinations of nodes $E_{ij}$ and tropes $T_{i}, T_{ij6}$. In particular, $\alpha, \beta_i \in \frac{1}{2} \mathbb{Z}$. We first check the condition $\theta^{*} M \simeq M$. A direct computation shows that $\theta^{*} M \simeq M$ if and only if $\alpha = \beta_1 + \beta_2 + \beta_3 + \beta_4$.

We still need to show that $M \in \pic(X)$.
Since $F_1 + F_2$ and $F_3 + F_4$ are divisible by 2 in $\pic(X)$, but no other $F_i + F_j$ are divisible by $2$ \cite[Proposition V.6]{Meh06} , hence the coefficients $\beta_i$ are elements in $\frac{1}{2} \mathbb{Z}$ such that $\beta_1+\beta_2 \in \mathbb{Z}$ and $\beta_3 + \beta_4 \in \mathbb{Z}$. 
\end{proof}

\begin{exmp}
Let $\beta_1 = \beta_2 = \beta_3 = \beta_4 = \frac{1}{2}$, and $\alpha = \sum \beta_i = 2$. The line bundle $H_X = 2L - \frac{1}{2} (F_1 + F_2 + F_3 + F_4)$ satisfies the assumptions in Lemma \ref{Lemma:ConditionThetaInvariantLB}, and defines an embedding of $X$ into $\mathbb{P}^5$ as the intersection of $3$ quadrics \cite[Theorem 2.5]{Shi77}. Such a Kummer surface $(X, H_X)$ carries a line bundle $M$ such that $\theta^{*} M \simeq M$, namely,
\[
M = 3L - F_1 - F_2 - F_4
\]
as in \cite[proof of Theorem 13]{AK17}. Furthermore, $H_X$ and $M$ satisfies the equation (\ref{equation:NumericalConditionOnK3}) as desired.
\end{exmp}

Let $H_X = \alpha L - \sum_{k=1}^4 \beta_k F_k$, and let $M = \alpha^{\prime} L - \sum_{k=1}^4 \beta_k^{\prime} F_k$. Suppose that both  $H_X$ and $M$ satisfies the assumptions in Lemma \ref{Lemma:ConditionThetaInvariantLB}. Now our question becomes: 

\begin{ques}
For a given ample polarization $H_X = \alpha L - \sum_{k=1}^4 \beta_k F_k$, find values $\beta_i^{\prime}$ so that the line bundles $H_X$ and $M$ verify the equation (\ref{equation:NumericalConditionOnK3}).
\end{ques}

By taking the substitutions 
\[
\left\{
\begin{array}{ccc}
S & = & \beta_1^{\prime} - \beta_1 \\
T & = & \beta_2^{\prime} - \beta_2 \\
U & = & \beta_3^{\prime} - \beta_3 \\
V & = & \beta_4^{\prime} - \beta_4 \\
\end{array}
\right. ,
\]
the equation (\ref{equation:NumericalConditionOnK3}) gives the system of two quadratic Diophantine equations, namely:
\begin{eqnarray*}
4(S+T+U+V)^2 - 8S^2 - 8T^2 - 8U^2 - 8V^2  & = & -4, \\
4(\alpha - (S+T+U+V))^2 - 8(\beta_1 - S)^2 - 8(\beta_2 - T)^2 &&\\
- 8(\beta_3 - U)^2 - 8(\beta_4 - V)^2  & = & -4.
\end{eqnarray*}

Dividing both equations by 4 and taking their difference, we have
\begin{eqnarray}
(S+T+U+V)^2 - 2S^2 - 2T^2 - 2U^2 - 2V^2 & = & -1 \label{Equation:DiophantineQuadratic}\\
2 \alpha (S+T+U+V) - 4 \beta_1 S - 4 \beta_2 T - 4 \beta_3 U - 4 \beta_4 V - \frac{d}{4} & = & \phantom{-}0 \label{Equation:DiophantineLinear}
\end{eqnarray}
where $\alpha = \beta_1 + \beta_2 + \beta_3 + \beta_4$ and $d = H_X^2 = 4 \alpha^2 - 8(\beta_1^2 + \beta_2^2 + \beta_3^2 + \beta_4^2)$. Therefore, finding $M$ is equivalent to finding a solution $(S,T,U,V)$ of this system of Diophantine equations (\ref{Equation:DiophantineQuadratic}), (\ref{Equation:DiophantineLinear}), where the corresponding $M$ satisfies the assumptions in Lemma \ref{Lemma:ConditionThetaInvariantLB}.

In most cases, finding integral solutions of a system of Diophantine equations is extremely hard even though it has rationally parametrized solutions. Instead, we provide a sufficient condition on $\beta_i$'s so that the system has a solution $(S,T,U,V)$ which fits into all the conditions we need.

\begin{prop}\label{Prop:SuffConditionDiophantineEquationOnK3}
Let $\beta_1, \beta_2, \beta_3, \beta_4 \in \frac{1}{2} \mathbb{Z}$ such that $\beta_1 + \beta_2 \in \mathbb{Z}$, $\beta_3 + \beta_4 \in \mathbb{Z}$, and 
\[
2S = \frac{1}{2 (\beta_3 + \beta_4)} \left[ (\beta_1 + \beta_2 + \beta_3 + \beta_4)^2 - 2 (\beta_1^2 + \beta_2^2 + \beta_3^2 + \beta_4^2 ) + 2 (\beta_3 - \beta_4) \right] \in \mathbb{Z}.
\]
Then the above system of Diophantine equations has a solution $(S,T,U,V) = (S,S,\frac{1}{2}, - \frac{1}{2})$ so that $\beta_1^{\prime}, \cdots, \beta_4^{\prime}$ satisfy the assumptions in Lemma \ref{Lemma:ConditionThetaInvariantLB}. 
\end{prop}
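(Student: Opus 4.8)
The plan is to verify directly that the explicit tuple $(S,T,U,V)=(S,S,\tfrac12,-\tfrac12)$, with $S$ fixed by the displayed formula, solves the system and yields admissible coefficients. Concretely there are three things to check: that this tuple satisfies the quadratic equation (\ref{Equation:DiophantineQuadratic}); that it satisfies the linear equation (\ref{Equation:DiophantineLinear}); and that the resulting coefficients $\beta_1'=\beta_1+S$, $\beta_2'=\beta_2+S$, $\beta_3'=\beta_3+\tfrac12$, $\beta_4'=\beta_4-\tfrac12$ meet the three integrality requirements of Lemma \ref{Lemma:ConditionThetaInvariantLB}. I take the system (\ref{Equation:DiophantineQuadratic})--(\ref{Equation:DiophantineLinear}) as already derived from (\ref{equation:NumericalConditionOnK3}) in the text above.

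For the quadratic equation the point is that the ansatz is engineered so that (\ref{Equation:DiophantineQuadratic}) holds for \emph{every} value of $S$. Writing $q(x)=(x_1+x_2+x_3+x_4)^2-2(x_1^2+x_2^2+x_3^2+x_4^2)$ for the quadratic form on the left-hand side, and $b$ for its associated bilinear form $b(x,y)=(\sum_i x_i)(\sum_i y_i)-2\sum_i x_iy_i$, one observes that $v=(1,1,0,0)$ is isotropic with $q(v)=0$, that it is orthogonal to $w=(0,0,\tfrac12,-\tfrac12)$ since $b(v,w)=0$, and that $q(w)=-1$. Hence $q(Sv+w)=S^2q(v)+2S\,b(v,w)+q(w)=-1$ identically in $S$, which is exactly (\ref{Equation:DiophantineQuadratic}). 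So the quadratic equation imposes no constraint, and all the content is carried by the single linear equation.

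The linear equation (\ref{Equation:DiophantineLinear}) then reduces to an affine equation in $S$ alone. Substituting $T=S$, $U=\tfrac12$, $V=-\tfrac12$ and $S+T+U+V=2S$, and using $\alpha-(\beta_1+\beta_2)=\beta_3+\beta_4$ together with $\tfrac{d}{4}=(\beta_1+\beta_2+\beta_3+\beta_4)^2-2(\beta_1^2+\beta_2^2+\beta_3^2+\beta_4^2)$, the equation collapses to $4(\beta_3+\beta_4)S=2(\beta_3-\beta_4)+\tfrac{d}{4}$, i.e.\ to precisely the displayed formula for $2S$. The hypothesis that this expression is an integer guarantees that the unique $S$ solving the linear equation satisfies $2S\in\mathbb{Z}$, hence $S\in\tfrac12\mathbb{Z}$, so a genuine lattice solution exists.

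Finally I would confirm the conditions of Lemma \ref{Lemma:ConditionThetaInvariantLB} on $M=\alpha'L-\sum_k\beta_k'F_k$. Each $\beta_k'$ lies in $\tfrac12\mathbb{Z}$ since $S\in\tfrac12\mathbb{Z}$ and $\beta_k\in\tfrac12\mathbb{Z}$; one has $\beta_1'+\beta_2'=(\beta_1+\beta_2)+2S\in\mathbb{Z}$ because $\beta_1+\beta_2\in\mathbb{Z}$ and $2S\in\mathbb{Z}$; and $\beta_3'+\beta_4'=\beta_3+\beta_4\in\mathbb{Z}$ automatically, the half-integer shifts in the $\beta_3,\beta_4$ slots cancelling. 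Setting $\alpha'=\beta_1'+\beta_2'+\beta_3'+\beta_4'$ makes $M$ $\theta^{*}$-invariant, which completes the verification. I do not expect a genuine analytic obstacle here, since the ansatz was chosen exactly to trivialize the quadratic equation; the only real requirement is the integrality of $2S$, which is the hypothesis, and the one point to get right is the compatibility of the $\pm\tfrac12$ shifts with the parity constraints $\beta_1+\beta_2,\beta_3+\beta_4\in\mathbb{Z}$.
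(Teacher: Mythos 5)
Your proposal is correct and follows essentially the same route as the paper: plug in the ansatz $(S,S,\tfrac12,-\tfrac12)$, observe that the quadratic equation holds identically in $S$, reduce the linear equation to the displayed formula for $2S$, and check the integrality conditions of Lemma \ref{Lemma:ConditionThetaInvariantLB}. You merely supply details the paper leaves as ``clear'' and ``straightforward'' (the isotropic-vector reformulation of the quadratic check and the explicit parity verification), which is a faithful elaboration rather than a different argument.
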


\begin{proof}
It is clear that $(S,S,\frac{1}{2}, - \frac{1}{2})$ is a solution for the equation (\ref{Equation:DiophantineQuadratic}). Substitute into the equation (\ref{Equation:DiophantineLinear}), we have a univariable linear equation
\[
4(\beta_3 + \beta_4) S - (\beta_1 + \beta_2 + \beta_3 + \beta_4)^2 + 2 (\beta_1^2 + \beta_2^2 + \beta_3^2 + \beta_4^2) - 2 \beta_3 + 2 \beta_4 = 0.
\]
It is straightforward that such a solution $(S,T,U,V) = (S,S,\frac{1}{2},-\frac{1}{2})$ provides $\beta_1^{\prime}, \beta_2^{\prime}, \beta_3^{\prime}, \beta_4^{\prime}$ which satisfies the assumptions in Lemma \ref{Lemma:ConditionThetaInvariantLB}.
\end{proof}

By taking suitable quadruples $(\beta_1, \beta_2, \beta_3, \beta_4)$, we obtain a number of polarized Enriques surfaces establishing the Borisov-Nuer conjecture as follows.

\begin{prop}\label{Proposition:Main}
Suppose that $H_X = (\beta_1 + \beta_2 + \beta_3 + \beta_4)L - \beta_1 F_1 - \beta_2 F_2 - \beta_3 F_3 - \beta_4 F_4$ is an ample and globally generated line bundle on $X$ such that 
$\beta_1, \cdots, \beta_4$ satisfy the assumptions in Proposition \ref{Prop:SuffConditionDiophantineEquationOnK3}. Then there is a polarized Enriques surface $(Y, H_Y)$ and a line bundle $N$ on $Y$ such that $H_X^2 = 2 H_Y^2$ and $(N-H_Y)^2 = (N - 2H_Y)^2 = -2$. In particular, the Borisov-Nuer conjecture holds for $(Y, H_Y)$.

\end{prop}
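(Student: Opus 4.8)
The plan is to assemble the pieces already assembled in the previous lemmas: Proposition~\ref{Prop:SuffConditionDiophantineEquationOnK3} produces a suitable $\theta^{*}$-invariant line bundle $M$ on the Kummer surface $X$, and Horikawa's Lemma~\ref{Lemma:Horikawa} lets me descend both $H_X$ and $M$ to line bundles on the Enriques quotient $Y$, where the numerical conditions of the Borisov-Nuer conjecture become exactly the halves of the conditions verified upstairs.

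First I would invoke Proposition~\ref{Prop:SuffConditionDiophantineEquationOnK3}. Under the stated hypothesis on $\beta_1,\dots,\beta_4$ it yields an integer $S$ together with the explicit solution $(S,T,U,V)=(S,S,\frac12,-\frac12)$ of the Diophantine system (\ref{Equation:DiophantineQuadratic})--(\ref{Equation:DiophantineLinear}). Setting $\beta_1'=\beta_1+S$, $\beta_2'=\beta_2+S$, $\beta_3'=\beta_3+\frac12$, $\beta_4'=\beta_4-\frac12$, and $M=\alpha'L-\sum_{k=1}^4\beta_k'F_k$ with $\alpha'=\sum_k\beta_k'$, the proposition guarantees that $M$ satisfies the hypotheses of Lemma~\ref{Lemma:ConditionThetaInvariantLB}; hence $M\in\pic(X)$ and $\theta^{*}M\simeq M$. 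Since the system (\ref{Equation:DiophantineQuadratic})--(\ref{Equation:DiophantineLinear}) was derived as an equivalent reformulation of equation (\ref{equation:NumericalConditionOnK3}), the pair $(H_X,M)$ automatically satisfies $(M-H_X)^2=(M-2H_X)^2=-4$.

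Next I would descend along $\sigma$. By hypothesis $H_X$ satisfies Lemma~\ref{Lemma:ConditionThetaInvariantLB}, hence is $\theta^{*}$-invariant, and we have just arranged that $M$ is too; so Horikawa's Lemma~\ref{Lemma:Horikawa} provides line bundles $H_Y,N\in\pic(Y)$ with $\sigma^{*}H_Y\simeq H_X$ and $\sigma^{*}N\simeq M$. These are determined only up to tensoring with $K_Y$, but since $K_Y$ is numerically trivial this does not affect any of the intersection numbers below. Because $\sigma$ is an \'etale double cover, one has $\sigma^{*}D_1\cdot\sigma^{*}D_2=2(D_1\cdot D_2)$ for all divisors $D_1,D_2$ on $Y$; consequently $H_X^2=2H_Y^2$ and $(N-H_Y)^2=\tfrac12(M-H_X)^2=-2$, $(N-2H_Y)^2=\tfrac12(M-2H_X)^2=-2$, which is precisely equation (\ref{equation:UlrichOnUnnodalEnriques}). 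Finally, since $H_X$ is ample and globally generated, so is $H_Y$ by the remark following equation (\ref{equation:NumericalConditionOnK3}), so $(Y,H_Y)$ is genuinely a polarized Enriques surface and the Borisov-Nuer conjecture holds for it.

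As for the main obstacle: essentially all of the arithmetic difficulty has already been isolated into Proposition~\ref{Prop:SuffConditionDiophantineEquationOnK3}, so the present argument is mostly bookkeeping. The one point demanding genuine care is the passage through Horikawa's lemma combined with the factor-of-two comparison of the intersection forms---in particular, checking that the $K_Y$-ambiguity in the descended $N$ is harmless and that ampleness and global generation descend correctly---but these are standard features of an \'etale double cover and present no real difficulty.
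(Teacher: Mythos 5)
Your proposal is correct and follows essentially the same route as the paper: invoke Proposition~\ref{Prop:SuffConditionDiophantineEquationOnK3} to produce the $\theta^{*}$-invariant line bundle $M$ with $(M-H_X)^2=(M-2H_X)^2=-4$, then descend $H_X$ and $M$ via Lemma~\ref{Lemma:Horikawa} and use the degree-two étale cover to halve the intersection numbers. The extra remarks on the $K_Y$-ambiguity and the descent of ampleness are harmless elaborations of the same argument.
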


\begin{proof}
Let $S = \frac{1}{4 (\beta_3 + \beta_4)} \left[ (\beta_1 + \beta_2 + \beta_3 + \beta_4)^2 - 2 (\beta_1^2 + \beta_2^2 + \beta_3^2 + \beta_4^2 ) + 2 (\beta_3 - \beta_4) \right]$. Proposition \ref{Prop:SuffConditionDiophantineEquationOnK3} implies that $(S,S, \frac{1}{2}, -\frac{1}{2})$ is a solution of the system of Diophantine equations (\ref{Equation:DiophantineQuadratic}), (\ref{Equation:DiophantineLinear}). Hence, the line bundle 
\[
M := (\beta_1 + \beta_2 + \beta_3 + \beta_4 + 2S) L - (\beta_1 + S) F_1 - (\beta_2 + S) F_2 + \left(\beta_3 + \frac{1}{2} \right) F_3 - \left( \beta_4 - \frac{1}{2} \right) F_4
\]
verifies the conditions $\theta^{*}M \simeq M$ and $(M-H_X)^2 = (2H_X - M)^2 = -4$.

By Lemma \ref{Lemma:Horikawa}, there are line bundles $H_Y$ and $N$ on an Enriques surface $Y = X/\theta$ such that $\sigma^* H_Y = H_X$, $\sigma^* N = M$ where $\sigma : X \to Y=X/\theta$ is the quotient map. Since $\sigma_{*} H_X = H_Y \oplus (H_Y \otimes K_Y)$ and $\sigma_{*} M = N \oplus (N \otimes K_Y)$, we conclude that $(N-H_Y)^2 = (2H_Y - N)^2 = -2$. 
\end{proof}

Together with a discussion on the moduli of (numerically) polarized Enriques surfaces, we get the following non-emptiness.
\begin{thm}\label{thm:Xi_g is nonempty}
The locus $\Xi_g$ contained in the Borisov-Nuer divisor $\mathcal {BN}_g \subset \mathcal F_g$ of polarized $K3$ surfaces of degree $H_X^2 = 2g-2$ is nonempty when $2g-2$ is divisible by $8$. In particular, there is a numerically polarized Enriques surface $(Y, h=[H_Y]) \in \mathcal{M}_{En,h}^a$ which verifies the Borisov-Nuer conjecture when $h^2 = g-1$ is divisible by $4$. Moreover, the conjecture also holds for every $(Y^{\prime}, [H_{Y^{\prime}}]) \in \mathcal M_{En,h}^a$. 
\end{thm}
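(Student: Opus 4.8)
The plan is to discharge the standing hypotheses of Proposition~\ref{Proposition:Main} for one explicit family of quadruples $(\beta_1,\beta_2,\beta_3,\beta_4)$, one for each integer $k\ge 1$, realizing every degree $H_X^2=8k$. Concretely, I would work on the Jacobian Kummer surface $X=Km(C)$ of a generic genus-$2$ curve $C$ and take
\[
\beta_1=\beta_2=\tfrac{k}{2},\qquad \beta_3=\beta_4=\tfrac12 ,
\]
so that $H_X=(\beta_1+\beta_2+\beta_3+\beta_4)L-\sum_k\beta_kF_k=(k+1)L-\tfrac{k}{2}(F_1+F_2)-\tfrac12(F_3+F_4)$. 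Once I know that this $H_X$ is ample and globally generated, Proposition~\ref{Proposition:Main} produces an Enriques surface $(Y,H_Y)$ together with a line bundle $N$ satisfying $H_Y^2=\tfrac12H_X^2$ and $(N-H_Y)^2=(N-2H_Y)^2=-2$, placing a point in $\Xi_g\subseteq\mathcal{BN}_g$ for $g=H_Y^2+1$. (For $k=1$ this specializes to the degree-$4$ example with $H_X=2L-\tfrac12\sum_kF_k$, recovering $g=5$.)

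First I would record the numerical verifications, which are routine given the earlier results. The lattice conditions of Lemma~\ref{Lemma:ConditionThetaInvariantLB} hold since $\beta_i\in\tfrac12\mathbb Z$, $\beta_1+\beta_2=k\in\mathbb Z$ and $\beta_3+\beta_4=1\in\mathbb Z$. Substituting into the integrality hypothesis of Proposition~\ref{Prop:SuffConditionDiophantineEquationOnK3} yields $2S=k\in\mathbb Z$, so the family meets all arithmetic requirements and the Diophantine system admits the solution $(S,T,U,V)=(\tfrac{k}{2},\tfrac{k}{2},\tfrac12,-\tfrac12)$, whence $M=(2k+1)L-k(F_1+F_2)-F_3$. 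A short computation with $L^2=4$, $F_k^2=-8$ and $L\cdot F_k=F_j\cdot F_k=0$ $(j\ne k)$ gives $H_X^2=4(k+1)^2-8(\tfrac{k^2}{2}+\tfrac12)=8k$, hence $H_Y^2=4k$ and $g=4k+1$; thus every $g$ with $g-1$ divisible by $4$ (equivalently $2g-2$ divisible by $8$) is realized.

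The main obstacle is proving that $H_X$ is ample and globally generated, which is precisely what Proposition~\ref{Proposition:Main} leaves as a hypothesis. My strategy would be to decompose
\[
H_X = H_0 + (k-1)\,D,\qquad H_0=2L-\tfrac12(F_1+F_2+F_3+F_4),\quad D=L-\tfrac12(F_1+F_2),
\]
where $H_0$ is the Shioda class embedding $X$ into $\mathbb P^5$ (hence very ample) and $D$ satisfies $D^2=0$ and $H_0\cdot D=4>0$, so $D$ is effective. If I can show that $D$ is \emph{nef}, then $H_X$ is a sum of an ample and a nef class, hence ample. The delicate point is that nef-ness of a square-zero class must be tested against every $(-2)$-curve, and a generic Jacobian Kummer surface carries infinitely many; I would first check $D\cdot\Gamma\ge 0$ on the nodes and tropes (one finds $D\cdot E_{ij}=1$ for the eight nodes of $F_1+F_2$ and $D\cdot E_{ij}=0$ otherwise), and then extend to all $(-2)$-curves using the known description of the nef cone of the generic Jacobian Kummer surface, identifying $D$ with a genus-one pencil. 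Global generation would then follow from Saint-Donat's base-point-freeness criterion for nef classes with positive self-intersection on $K3$ surfaces, after excluding the exceptional configuration of an elliptic curve $E$ with $H_X\cdot E=1$. I expect this control of the square-zero classes on $X$ to be the hardest technical step.

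Finally I would assemble the conclusion and the moduli statement. The triple $(X,H_X,M)$ with $M=\sigma^*N$ satisfies $\theta^*H_X\simeq H_X$, $\theta^*M\simeq M$ and $(X,H_X,M)\in\mathcal{BN}_g$, so it is a point of $\Xi_g$; after checking that $H_X$ is primitive it lies in $\mathcal F_g$ with $g=4k+1$, giving $\Xi_g\neq\emptyset$. The descended pair $(Y,[H_Y])$ has polarization type $h=[H_Y]$ with $h^2=4k=g-1$, so $(Y,[H_Y])\in\mathcal M_{En,h}^a$. For the last assertion I would invoke the argument already given in the introduction: since $\num(Y')\cong U\oplus E_8(-1)$ for every Enriques surface and the Borisov-Nuer condition $(N'-H')^2=(N'-2H')^2=-2$ is purely numerical, the numerical solution represented by $[N]$ transfers along any lattice isometry fixing $h$, so the conjecture holds for every $(Y',[H_{Y'}])\in\mathcal M_{En,h}^a$ of the same polarization type.
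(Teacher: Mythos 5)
Your proposal is correct and follows essentially the same route as the paper: the same choice $H_X=(k+1)L-\tfrac{k}{2}(F_1+F_2)-\tfrac12(F_3+F_4)$, the same decomposition into the Shioda very ample class plus $(k-1)\bigl[L-\tfrac12(F_1+F_2)\bigr]$, the same solution $(S,T,U,V)=(\tfrac{k}{2},\tfrac{k}{2},\tfrac12,-\tfrac12)$ giving $M=(2k+1)L-k(F_1+F_2)-F_3$, and the same lattice-theoretic transfer to all of $\mathcal M_{En,h}^a$. The only difference is that where you propose to verify nefness of $L-\tfrac12(F_1+F_2)$ by hand and invoke Saint-Donat, the paper simply cites the known fact that this class induces an elliptic fibration on the generic Jacobian Kummer surface (Kumar, Garbagnati--Sarti), which settles ampleness and global generation of the sum immediately.
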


\begin{proof}
Let $X$ be a general Jacobian Kummer surface as above. It suffices to construct a pair $(H_X,M)$ of line bundles on $X$ determined by the values $\beta_i$'s and $\beta_i^{\prime}$'s satisfying Proposition \ref{Prop:SuffConditionDiophantineEquationOnK3}. Suppose $g=4k+1$ so that $H_X^2 = 8k$ is divisible by 8. We pick $H_X = (k+1)L - \frac{k}{2} (F_1 + F_2) - \frac{1}{2} (F_3 + F_4)$ so that $H_X^2 = 8k, \beta_1 = \beta_2 = \frac{k}{2}, \beta_3=\beta_4 = \frac{1}{2}$. Note that $H_X = \left[ 2L - \frac{1}{2} (F_1 + F_2 + F_3 + F_4) \right] +  (k-1) \left[ L - \frac{1}{2} (F_1 + F_2) \right]$ is a sum of two line bundles. Since the former one is very ample, and the later one is a multiple of a line bundle which induces an elliptic fibration over $\p^1$ (see \cite[Fibration 7]{Kum14} and \cite[Section 5.1]{GS16}), their sum $H_X$ is indeed ample and globally generated.

Moreover, the value 
\[
\frac{1}{2 (\beta_3 + \beta_4 ) } \left[ (\beta_1 + \beta_2 + \beta_3 + \beta_4)^2 - 2(\beta_1^2 + \beta_2^2 + \beta_3^2 + \beta_4^2) + 2 (\beta_3 - \beta_4) \right] = k
\]
is an integer, we conclude that there is a line bundle $M$ which verifies the equation
\[
(M-H_X)^2 = (M-2H_X)^2 = -4
\]
by Proposition \ref{Prop:SuffConditionDiophantineEquationOnK3}. For instance, we may take $M = (2k+1)L- k(F_1 + F_2) -  F_3$. 
\end{proof}

\begin{cor}
Let $(Y, h=[H_Y]) \in \mathcal M_{En,h}^{a}$ be a numerically polarized Enriques surface appearing in Theorem \ref{thm:Xi_g is nonempty}. Let $(Y^{\prime}, [H_Y^{\prime}]) \in \mathcal M_{En,h}^a$ be a generic element. Then the Enriques surface $Y^{\prime}$ has an $H_Y^{\prime}$-Ulrich line bundle, in the sense of \cite[Definition 1]{AK17}.
\end{cor}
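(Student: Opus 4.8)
The plan is to combine Theorem~\ref{thm:Xi_g is nonempty} with the equivalence of Borisov and Nuer recalled in the introduction. The key point is that the \emph{numerical} assertion established for one (possibly nodal) Kummer-type surface propagates to \emph{every} surface of type $h$, and on a generic, hence unnodal, one it upgrades to an honest Ulrich line bundle.

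First I would record the genericity inputs. The moduli space $\mathcal M_{En,h}^a$ is irreducible, the unnodal locus $\mathcal M_{En,h}^{nn}$ is open and nonempty in it, and the locus where the polarization is (very) ample and globally generated is also open and nonempty: it contains the surface $(Y,H_Y)$ produced in Theorem~\ref{thm:Xi_g is nonempty}, whose $H_Y$ is ample and globally generated. Intersecting these two dense open subsets, a generic element $(Y', [H_{Y'}])$ represents an unnodal surface $Y'$ whose polarization $H_{Y'}$ is globally generated, as required by the notion of $H_{Y'}$-Ulrich line bundle in \cite[Definition 1]{AK17}.

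Next I would transfer the numerical data. Since $h^2=g-1$ is divisible by $4$, Theorem~\ref{thm:Xi_g is nonempty} asserts that the Borisov--Nuer conjecture holds throughout $\mathcal M_{En,h}^a$. Because $\num(Y')\simeq U\oplus E_8(-1)$ does not depend on $Y'$, the class produced from the $K3$ cover gives, on every $Y'$ of type $h$, a numerical class with the required intersection numbers against $H_{Y'}$; lifting it through the torsion $\{\mathcal O_{Y'}, K_{Y'}\}$ yields an actual line bundle $N'$ on $Y'$ with $(N'-H_{Y'})^2=(N'-2H_{Y'})^2=-2$.

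Finally, on the unnodal surface $Y'$ the equation (\ref{equation:UlrichOnUnnodalEnriques}) is exactly the Borisov--Nuer criterion for $N'$ to be an $H_{Y'}$-Ulrich line bundle: unnodality forces the vanishing of $H^0$ and, by Serre duality, of $H^2$ for the twists $N'-H_{Y'}$ and $N'-2H_{Y'}$, after which $\chi=0$ kills $H^1$. Hence $Y'$ carries the desired Ulrich line bundle. Because that cohomological equivalence is already available to us, the only non-formal ingredient lies in the genericity bookkeeping above: one must ensure that unnodality and global generation of the polarization hold simultaneously on a dense open subset of $\mathcal M_{En,h}^a$, so that both hypotheses of the Borisov--Nuer equivalence are genuinely in force for a generic $(Y', [H_{Y'}])$.
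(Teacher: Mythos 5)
Your proposal is correct and follows essentially the same route as the paper: transfer the numerical class to every surface of type $h$ via the fixed lattice $U\oplus E_8(-1)$, use genericity to get unnodality, and invoke the Borisov--Nuer equivalence (the paper simply cites \cite[Proposition 2.1]{BN18} where you re-derive the cohomology vanishing). Your extra bookkeeping on global generation of the polarization is a reasonable elaboration but does not change the argument.
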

\begin{proof}
Note that any $(Y^{\prime}, [H_Y^{\prime}]) \in \mathcal M_{En,h}^a$  carries a line bundle $N^{\prime} \in \pic(Y^{\prime})$ such that $(N^{\prime} - H_Y^{\prime})^2 = (N^{\prime} - 2H_Y^{\prime})^2 = -2$. Since $Y^{\prime}$ is general, it is unnodal; it does not contain any smooth $(-2)$-curves. By \cite[Proposition 2.1]{BN18}, $N^{\prime}$ is an $H_Y^{\prime}$-Ulrich line bundle as desired.
\end{proof}

\begin{exmp}
There are several possible choices of $H_X$ satisfying the assumptions of Proposition \ref{Prop:SuffConditionDiophantineEquationOnK3} and Theorem \ref{thm:Xi_g is nonempty} when we fix the degree $H_X^2$. For instance, take $\beta_1 = \beta_2 = \frac{m}{2}$, $\beta_3 = \beta_4 = \frac{n}{2}$ where $m, n$ are positive integers. The line bundle $H_X := (m+n)L - \frac{m}{2}(F_1 + F_2) - \frac{n}{2} (F_3+F_4)$ is ample and globally generated with the self-intersection number $H_X^2 = 8mn$. Furthermore, the value
\[
\frac{1}{2 (\beta_3 + \beta_4 ) } \left[ (\beta_1 + \beta_2 + \beta_3 + \beta_4)^2 - 2(\beta_1^2 + \beta_2^2 + \beta_3^2 + \beta_4^2) + 2 (\beta_3 - \beta_4) \right] = m
\]
is always an integer, so we are able to find a solution of Diophantine equations (\ref{Equation:DiophantineQuadratic}), (\ref{Equation:DiophantineLinear}).
\end{exmp}

\begin{rem}\label{Remark:DegreeNotDivisibleBy4}
The system of Diophantine equations (\ref{Equation:DiophantineQuadratic}), (\ref{Equation:DiophantineLinear}) needs not to have a desired solution. For example, let $\beta_1 = 1$, $\beta_2 = \beta_3 = \beta_4 = 0$. Then the second equation (\ref{Equation:DiophantineLinear}) becomes
\[
-2S + 2T + 2U + 2V = -1.
\]
Since $-S+T = (\beta_2^{\prime} - \beta_1^{\prime}) - (\beta_2 - \beta_1)$ and $U+V$ are  integers, the left-hand side must be an even integer. Hence, there is no solution which satisfies the assumptions. In general, by a simple parity argument, one can easily check that the system (\ref{Equation:DiophantineQuadratic}), (\ref{Equation:DiophantineLinear}) does not have a solution $(S,T,U,V)$ such that the corresponding $M$ satisfies the assumptions of Lemma \ref{Lemma:ConditionThetaInvariantLB} when the number $\frac{d}{4}$ (which stands for $\frac{1}{4} H_X^2$ in the context) is not an even integer. This is the reason why it is not easy to verify the nonemptiness of $\Xi_g$ when $g-1$ is not divisible by $4$. For instance, we cannot verify that Borisov-Nuer conjecture holds for a Fano polarized Enriques surface $(Y, \Delta)$ in the above arguments, since $g-1 = \Delta^2 = 10$ is not divisible by $4$.

However, there might be plenty of chances to find a solution of the equation (\ref{equation:NumericalConditionOnK3}) using the same Jacobian Kummer surface. We only address a few more examples as evidence. We cannot guarantee that the following bundles $H_X$ are ample and/or globally generated, however, this aspect is not very important from the viewpoint of the original Borisov-Nuer conjecture.  

\begin{enumerate}[(i)]
\item Let $H_X = 4L - 2F_1 - F_2 - \frac{1}{2}F_3 - \frac{1}{2}F_4$ so that $\theta^{*}H_X \simeq H_X$ and $H_X^2 = 20$. We take $M$ as
\[
M = 6L - 3F_1 - \frac{3}{2} (E_0+E_{13}+E_{14}+E_{16}+E_{25}+E_{34}+E_{36}+E_{46}).
\] 
Since
\[
L-T_1-T_{346}+E_{12}+E_{15} = \frac{1}{2} (E_0+E_{13}+E_{14}+E_{16}+E_{25}+E_{34}+E_{36}+E_{46}),
\]
$M$ is a line bundle on $X$. Furthermore, $M$ satisfies $\theta^{*}M \simeq M$ and $(M-H_X)^2 = (M-2H_X)^2 = -4$. Hence, there is an Enriques surface $Y$ and two line bundles $H_Y, N$ with $H_Y^2=10$ such that $(N-H_Y)^2 = (2H_Y-N)^2 = -2$. 

\item Let $H_X = 6L - 3F_1 - 2F_2 - \frac{1}{2}F_3 - \frac{1}{2}F_4$ so that $\theta^{*}H_X \simeq H_X$ and $H_X^2 = 36$. We take $M$ as
\[
M = 8L - \frac{7}{2}F_1 - \frac{3}{2}F_2 - \frac{3}{2}(E_0 + E_{13}+E_{14}+E_{16}+E_{25}+E_{34}+E_{36}+E_{46}).
\]
We have $M \in \pic(X)$, $\theta^{*}M \simeq M$, and $H_X, M$ satisfy the equation (\ref{equation:NumericalConditionOnK3}).

\item Let $H_X = 8L - 4F_1 - 3F_2 - \frac{1}{2}F_3 - \frac{1}{2}F_4$. We have $\theta^{*}H_X \simeq H_X$ and $H_X^2 = 52$. We take $M$ as \[
M = 10L - 4(F_1+F_2) - \frac{1}{2}(E_0 + E_{13}+E_{14}+E_{16}+E_{25}+E_{34}+E_{36}+E_{46}) - (E_{23}+E_{24}+E_{35}+E_{45}).
\]
We have $M \in \pic(X)$, $\theta^{*}M \simeq M$, and $H_X, M$ satisfy the equation (\ref{equation:NumericalConditionOnK3}).
\end{enumerate}
\end{rem}

\begin{ack}
MA was partly supported by a grant of Ministery of Research and Innovation, CNCS--UEFISCDI, project number PN-III-P4-ID-PCE-2016-0030, within PNCDI III. YK thanks Simion Stoilow Institute of Mathematics of the Romanian Academy (IMAR) for the hospitality during his visit. He was partly supported by Project I.6 of SFB-TRR 195 ``Symbolic Tools in Mathematics and their Application'' of the German Research Foundation (DFG). 
\end{ack}

\def\cprime{$'$} \def\cprime{$'$} \def\cprime{$'$} \def\cprime{$'$}
  \def\cprime{$'$} \def\cprime{$'$} \def\dbar{\leavevmode\hbox to
  0pt{\hskip.2ex \accent"16\hss}d} \def\cprime{$'$} \def\cprime{$'$}
  \def\polhk#1{\setbox0=\hbox{#1}{\ooalign{\hidewidth
  \lower1.5ex\hbox{`}\hidewidth\crcr\unhbox0}}} \def\cprime{$'$}
  \def\cprime{$'$} \def\cprime{$'$} \def\cprime{$'$}
  \def\polhk#1{\setbox0=\hbox{#1}{\ooalign{\hidewidth
  \lower1.5ex\hbox{`}\hidewidth\crcr\unhbox0}}} \def\cdprime{$''$}
  \def\cprime{$'$} \def\cprime{$'$} \def\cprime{$'$} \def\cprime{$'$}
\providecommand{\bysame}{\leavevmode\hbox to3em{\hrulefill}\thinspace}
\providecommand{\MR}{\relax\ifhmode\unskip\space\fi MR }
\providecommand{\MRhref}[2]{%
  \href{http://www.ams.org/mathscinet-getitem?mr=#1}{#2}
}
\providecommand{\href}[2]{#2}


\end{document}